\newcolumntype{L}[1]{>{\raggedright\arraybackslash}p{#1}} 
\newcolumntype{C}[1]{>{\centering\arraybackslash}p{#1}} 
\newcolumntype{R}[1]{>{\raggedleft\arraybackslash}p{#1}} 
\theoremstyle{plain}
\newtheorem{thm}{Theorem}[section]
\newtheorem{prop}[thm]{Proposition}
\newtheorem{lem}[thm]{Lemma}
\newtheorem{cor}[thm]{Corollary}
\theoremstyle{definition}
\theoremstyle{remark}
\newcommand{\Var}{\operatorname{Var}}
\newcommand{\Prob}{\mathrm{P}}
\newcommand{\Erw}{\mathrm{E}}
\newcommand{\ind}{\mathbbmss{1}}
\newcommand{\Nd}{\operatorname{N}}	
\newcommand{\diag}{\operatorname{diag}}
\newcommand{\costs}{\operatorname{cost}}
\newcommand{\Oo}{\mathcal{O}}
\DeclareMathOperator{\vecop}{vec}
\DeclareMathOperator{\matop}{mat}
\newcommand{\Mid}{\; | \;}
\newcommand{\bigMid}{\; \big| \;}
\newcommand{\biggMid}{\; \bigg| \;}
\newcommand{\Tt}{\mathsf{T}}
\newcommand{\IA}{\text{IA}}
\newcommand{\WIK}{\text{WIK}}
\newcommand{\KPW}{\text{FS}}
\newcommand{\EM}{\text{EM}}
\newcommand{\Se}{n}		
\title{{On the Approximation and Simulation of Iterated Stochastic Integrals 
		and the Corresponding L\'{e}vy Areas 
		in Terms of a Multidimensional Brownian Motion}}
\author{Jan Mrongowius %
	and Andreas R\"o{\ss}ler\thanks{e-mail: roessler@math.uni-luebeck.de, \quad
	January 21, 2021}
\bigskip
\\
\small{
	Institute of Mathematics, Universit\"at zu L\"ubeck,} \\
\small{Ratzeburger Allee 160, 23562 L\"ubeck, Germany} 
}
\date{}
\begin{document}

\allowdisplaybreaks

\maketitle

\begin{abstract}
\noindent
A new algorithm for the approximation and simulation of twofold iterated
stochastic integrals together with the corresponding L\'{e}vy areas driven 
by a multidimensional Brownian motion is proposed. 
The algorithm is based on a 
truncated Fourier series approach. However, the approximation of the remainder
terms differs from the approach considered by Wiktrosson~(2001). As the main
advantage, the presented algorithm makes use of a diagonal covariance matrix
for the approximation of one part of the remainder term and has a higher 
accuracy due to an exact approximation of the other part of the remainder. This
results in a significant reduction of the computational cost compared to, e.g., 
the algorithm introduced by Wiktorsson. Convergence in $L^p(\Omega)$-norm 
with $p \geq 2$ for the approximations calculated with the new algorithm as well 
as for approximations calculated by the basic truncated Fourier series algorithm is 
proved and the efficiency of the new algorithm is analyzed.
\end{abstract}
%
%
%
%
\section{Introduction}
Iterated stochastic integrals play an important role in the context of stochastic
processes driven by Brownian motion. They appear, e.g., in stochastic Taylor
expansions~\cite{MR1214374}, the Wiener chaos expansion~\cite{MR2200233}
and they are needed for approximation schemes of higher orders like the Milstein
scheme~\cite{Mil75,Mil95} or stochastic Runge-Kutta methods~\cite{MR2669396}
for stochastic differential equations (SDEs) as well as for stochastic partial
differential equations (SPDEs)~\cite{HaRoe20arXiv2006.08275v1,MR3320928}.
The approximation and simulation of iterated stochastic integrals driven by
a multidimensional Brownian motion is still subject
of ongoing research and there is a demand for efficient algorithms. 
Milstein (1995)~\cite{Mil95} as well as Kloeden, Platen and Wright
(1992)~\cite{MR1178485}, see also \cite{MR1214374},
proposed an approximation algorithm for twofold iterated stochastic integrals 
in case of an $m$-dimensional driving Brownian motion for arbitrary $m \geq 1$
that is based on a truncated Fourier series expansion
of the Brownian bridge process and they prove the exact $L^2$-error for that
approach. However, the convergence rate of the truncated Fourier series approach 
is not good enough for a reasonable application with, e.g., the Milstein scheme  
compared to the Euler-Maruyama scheme~\cite{MR1214374,Mil95} if the
computational cost are taken into account in the finite dimensional SDE setting.
It has to be mentioned that Gaines and Lyons (1994)~\cite{MR1284705} proposed
an algorithm for the special case of a 2-dimensional driving Brownian motion 
based on a generalization of Marsaglia's rectangle-wedge-tail method that is, however, complicated to implement. Moreover, in the papers \cite{MR3178331} and \cite{MR2116088} the authors approximate the 
inverse of the distribution function by polynomials in order to simulate realizations
of the twofold iterated stochastic integral in the case of a 2-dimensional driving
Brownian motion.
\\ \\
For the general $m$-dimensional case with $m \geq 1$, Wiktorsson (2001)~\cite{MR1843055}
improved the Fourier series 
approach by taking into account an approximation of the truncation term. 
Wiktorsson's idea is a break through on the way to increase the efficiency 
of such approximation algorithms and allows for an improvement of the strong 
order of convergence of, e.g., the Milstein scheme compared to the Euler 
Maruyama scheme in case of SDEs driven by a multidimensional Brownian motion.
Recently, Pleis (2020)~\cite{PleisDiss20} proposed an improved algorithm that 
is based on the approach of Wiktorsson~\cite{MR1843055}. This algorithm is of the
same order of convergence as that of Wiktorsson, however Pleis obtained some
smaller constant for his error estimate and thus needs less 
computational effort compared to Wiktorsson's algorithm. Moreover, convergence
in $L^p$-norm for $p \geq 2$ is proved for this algorithm.
For the infinite dimensional setting, the algorithm proposed in
\cite{MR1178485,MR1214374,Mil95} as well as 
the algorithm in \cite{MR1843055} have been generalized by Leonhard and
R\"o{\ss}ler (2019)~\cite{MR3949104} for the approximation of iterated 
stochastic integrals driven by a $Q$-Wiener process like
they are needed, e.g., in the setting of SPDEs. We refer to
\cite{HaRoe20arXiv2006.08275v1,MCQMC2018} and \cite{MR3320928} for 
a detailed discussion of this topic for Milstein type schemes applied to SPDEs
without commutative noise.
\\ \\
In the present paper, we propose an algorithm for the approximation of twofold 
iterated stochastic integrals driven by a multidimensional Brownian motion.
This new algorithm is based on the Fourier series approach and makes use of 
the seminal idea due to Wiktorsson~\cite{MR1843055} to approximate the 
truncation term by some appropriate multivariate Gaussian random variable.
However, in 
contrast to Wiktorsson's algorithm, we split the truncation term into two parts.
The first part can be exactly simulated using Gaussian random
variables, see \cite{MR1178485,MR1214374,Mil95}, while that of the second part 
is conditionally Gaussian and approximated by replacing the exact conditional
covariance matrix by a deterministic diagonal matrix similar to the approach 
by Wiktorsson~\cite{MR1843055}. The idea to split the truncation term was first
considered by Milstein~\cite{Mil95} as well as by Kloeden, Platen and
Wright~\cite{MR1178485}. Pleis~\cite{PleisDiss20} combined this idea together 
with Wiktorsson's idea for the approximation of the truncation terms. 
In contrast to the approach by Pleis, we combine these two ideas in a different 
way in order to minimize conditional dependencies of the appearing random
variables.
As a result of this, we derive a different algorithm with better error estimates and 
thus less computational effort. Compared to Wiktorsson's algorithm, the main
advantage of the newly proposed algorithm are error estimates with some smaller
constant and that the covariance matrix for the approximate simulation of the
truncation error is a constant diagonal matrix. This has significant consequences 
if the iterated stochastic integrals need to be simulated on many time intervals of
some time discretization for, e.g., a numerical scheme like the Milstein scheme. 
If Wiktorsson's algorithm is applied, one has to recalculate the square root of the
covariance matrix for the approximation of the truncation term each time step
as it depends on the increments of Brownian motion. In contrast to this,
the covariance matrix of our algorithm is simply the identity matrix multiplied by 
some constant. Thus, it is not necessary to recalculate the covariance matrix 
and to calculate the square root of this matrix for each time step. Furthermore, 
due to the smaller error constant less normally distributed random variables are
needed by the proposed algorithm. Beside
enormous savings of computational cost the newly proposed approach allows 
for a dynamical scaling of the dimension of the driving Brownian motion without
a recalculation of already derived approximations due to the diagonal structure 
of the covariance matrix.
\\ \\
The paper is organized as follows: In Section~2, we briefly describe the 
Fourier series expansion of the Brownian bridge process, which leads to the
approximation algorithm proposed by Milstein~\cite{Mil95} and by Kloeden,
Platen and Wright~\cite{MR1178485}. Given their approach, we analyze
the conditional distribution of the truncation terms in Section~3, which turn
out to have a Gaussian distribution. This feature is used in Section~4 to
define the newly proposed approximation algorithm for twofold iterated 
stochastic integrals. Since the Hilbert space setting allows for sharper error
estimates, we first prove error estimates in $L^2$-norm in Section~4.1 whereas
$L^p$-norm estimates for $p>2$ are proved in Section~4.2 for the well known
truncated Fourier series approach by Milstein~\cite{Mil95} and by Kloeden, 
Platen and Wright~\cite{MR1178485} as well as for the newly proposed algorithm.
In contrast to the approximation problem, we consider the simulation of 
iterated stochastic integrals with the newly proposed algorithm in Section~5.1.
Moreover, we compare the efficiency and the computational cost of our new
algorithm with the the cost of the algorithm proposed by
Wiktorsson~\cite{MR1843055} and the algorithm due to Milstein~\cite{Mil95} 
and Kloeden, Platen and Wright~\cite{MR1178485}.
\section{A series expansion for L\'{e}vy areas}
In the following, let a complete probability space $(\Omega, \mathcal{F}, \Prob)$ 
be given and
let $(W_t)_{t \geq 0}$ denote an $m$-dimensional Brownian motion where
$W_t = (W_t^1, \ldots, W_t^m)^{\Tt}$ and $(W_t^i)_{t \geq 0}$ for $i=1, \ldots, m$
are independent scalar Brownian motions.
We want to approximate the iterated stochastic It\^o integrals
\begin{align} \label{Iter-Def-th}
	I_{(i,j)}(t,t+h) = \int_{t}^{t+h} \int_{t}^s \, \mathrm{d}W_u^i 
	\, \mathrm{d}W_s^j
\end{align} 
or, in case of Stratonovich calculus, the iterated stochastic Stratonovich integrals
\begin{align} \label{Iter-Strato-Def-th}
	J_{(i,j)}(t,t+h) = \int_{t}^{t+h} \int_{t}^s \, {\circ \mathrm{d}W_u^i}
	\, {\circ \mathrm{d}W_s^j}
\end{align} 
for $i, j \in \{1, \ldots, m\}$ and some $0 \leq t < t+ h < \infty$. The idea is to
consider a Fourier series 
expansion of the corresponding Brownian bridge process that can be used
to replace the integrand of the twofold iterated stochastic integral. Then, a 
detailed analysis for a suitable truncation of this Fourier series as well as an
approximation of the truncation term will be applied in order to develop 
the approximation algorithm that we propose in the following. 
\\ \\
Let $\Delta W^i(t,t+h) = W_{t+h}^i - W_t^i$ denote the increment of the $i$th
component of $(W_t)_{t \geq 0}$ for $i=1, \ldots, m$.
We point out that the 
joint distribution of $I_{(i,j)}(t,t+h)$, $\Delta W^i(t,t+h)$ and $W^j(t,t+h)$ for 
$i,j=1, \ldots, m$ does not depend on $t$. Therefore, w.l.o.g.\ we choose $t=0$
in the following and we simply write $I_{(i,j)}(h)$
and $\Delta W^i(h)$ for $I_{(i,j)}(t,t+h)$ and $\Delta W^i(t,t+h)$, respectively.
The same applies to the iterated Stratonovich integral $J_{(i,j)}(h)$. Further, let
$\Delta W(h) = (\Delta W^i(h))_{1 \leq i \leq m}$ denote the $m$-dimensional 
vector of the increments of the Brownian motion and let
$I(h) = (I_{(i,j)}(h) )_{1 \leq i,j \leq m}$ and $J(h) = (J_{(i,j)}(h))_{1 \leq i,j \leq m}$
denote the $m \times m$ matrices of the iterated stochastic It{\^o} and
Stratonovich integrals, respectively.
\\ \\
Twofold iterated stochastic integrals can be expressed by so-called 
L\'{e}vy stochastic area integrals and increments of the driving Brownian motions.
Therefore, the approximation of these iterated stochastic 
integrals is synonymous with the approximation of the  corresponding L\'{e}vy 
area. Let $A(h) = (A_{(i,j)}(h))_{1 \leq i,j \leq m}$ denote the $m \times m$ matrix
of the L\'{e}vy areas $A_{(i,j)}(h)$ that are defined as
\begin{align} \label{Def-Levy-Area}
	A_{(i,j)}(h) = \frac{1}{2} \big( I_{(i,j)}(h) - I_{(j,i)}(h) \big)
\end{align}
for $i,j = 1, \ldots, m$ (see, e.g., \cite{MR1284705}). Then, it holds that
\begin{align} 
	\Delta W^i(h) \, \Delta W^j(h) &= I_{(i,j)}(h) + I_{(j,i)}(h) \, , \\
	\label{Prop-Levy-Area-ij-ji}
	A_{(i,j)}(h) &= -A_{(j,i)}(h) \, , \\
	\label{Prop-Levy-Area-ii}
	A_{(i,i)}(h) &= 0
\end{align}
for $i \neq j$ and
\begin{align} \label{Prop-Iij}
	I_{(i,j)}(h) &= \frac{1}{2} \big( \Delta W^i(h) \, \Delta W^j(h) - h \, \ind_{i=j} \big)
	+ A_{(i,j)}(h) \,, \\
	 \label{Prop-Jij}
	J_{(i,j)}(h) &= \frac{1}{2} \big( \Delta W^i(h) \, \Delta W^j(h) \big)
	+ A_{(i,j)}(h)
\end{align}
for any $i,j \in \{1, \ldots, m\}$. Since $J_{(i,j)}(h) = I_{(i,j)}(h)$ for $i \neq j$
and $J_{(i,i)}(h) = I_{(i,i)}(h) + \frac{1}{2} h$, we can restrict our considerations 
to iterated stochastic It\^o integrals $I_{(i,j)}(h)$ in the following.
\\ \\
In order to obtain a Fourier series expansion of the Brownian motion 
$(W^i_t)_{t \geq 0}$, we start with the so-called Brownian bridge process
$(\tilde{W}^i_t)_{t \in [0,h]}$ defined by
\begin{align} \label{Brownian-bridge}
	\tilde{W}^i_t = W^i_t - \frac{t}{h} W^i_h \, , \quad t \in [0,h] \, .
\end{align}
Milstein~\cite{Mil95} as well as Kloeden, Platen and Wright~\cite{MR1178485}
considered the Fourier series expansion of the Brownian bridge process
$(\tilde{W}^i_t)_{t \in [0,h]}$ which results in
\begin{align} \label{BBridge-Series}
	W^i_t = \frac{t}{h} W^i_h + \frac{a_{i,0}}{2} + \sum_{k=1}^{\infty} a_{i,k} \, 
	\cos \bigg( \frac{2 \pi k t}{h} \bigg) + b_{i,k} \, \sin \bigg( \frac{2 \pi k t}{h} \bigg)
\end{align}
for $t \in [0,h]$ with random coefficients
\begin{align*}
	a_{i,k} &= \frac{2}{h} \int_0^h \Big( W^i_s - \frac{s}{h} W^i_h \Big) 
	\cos \bigg( \frac{2 \pi k s}{h} \bigg) \, \mathrm{d}s \, ,
	\quad \quad
	b_{i,k} = \frac{2}{h} \int_0^h \Big( W^i_s - \frac{s}{h} W^i_h \Big) 
	\sin \bigg( \frac{2 \pi k s}{h} \bigg) \, \mathrm{d}s
\end{align*} 
for $k \in \mathbb{N}_0$ and for $i \in \{1, \ldots, m\}$. 
The series on the right hand side of equation \eqref{BBridge-Series} converges 
in the  $L^2(\Omega)$-norm~\cite{MR1178485,MR1214374}.
Since the sample paths of a Brownian motion are almost surely continuous,
the right hand side of \eqref{BBridge-Series} converges also $\Prob$-a.s.\ in 
the $L^2([0,h])$-norm to $(W_t)_{t \in [0,h]}$.
The coefficients $a_{i,k}$ and $b_{i,k}$ as well as the increments $\Delta W^i(h)$
for $i \in \{1, \ldots, m\}$ and $k \in \mathbb{N}$ are all independent Gaussian 
random variables with $a_{i,k} \sim \Nd \big( 0, \frac{h}{2 \pi^2 k^2} \big)$,
$b_{i,k} \sim \Nd \big( 0, \frac{h}{2 \pi^2 k^2} \big)$ and $\Delta W^i(h) \sim 
\Nd(0,h)$.
Further, it holds $a_{i,0} = -2 \sum_{k=1}^{\infty}
a_{i,k}$ with $a_{i,0} \sim \Nd \big( 0, \frac{h}{3} \big)$.
\\ \\
Next, the integrand $W_s^i$ in \eqref{Iter-Def-th} is replaced by its 
Fourier series \eqref{BBridge-Series}. Integrating this expression results in
\eqref{Prop-Iij} with representation
\begin{align} \label{LevyArea-Series-Rep}
	A_{(i,j)}(h) &= \frac{1}{2} \big( a_{i,0} \,
	\Delta W^j(h) - a_{j,0} \, \Delta W^i(h) \big)
	+ \pi \sum_{k=1}^{\infty} k \big( a_{i,k} \, b_{j,k} - a_{j,k} \, b_{i,k} \big)
\end{align}
for $i, j \in \{1, \ldots, m\}$, see also \cite{MR1178485,MR1214374,Mil95}. Series
\eqref{LevyArea-Series-Rep} is the starting point for the construction of several
approximation algorithms. The main idea is to truncate the series in an appropriate
way. In order to simplify notation, we replace the random variables in 
\eqref{LevyArea-Series-Rep} by standard Gaussian random variables such that
\begin{align*}
	A_{(i,j)}(h) = \Delta W^i(h) \frac{\sqrt{h}}{\sqrt{2} \pi} \sum_{k=1}^{\infty}
	\frac{1}{k} X_{j,k} - \Delta W^j(h) \frac{\sqrt{h}}{\sqrt{2} \pi} \sum_{k=1}^{\infty}
	\frac{1}{k} X_{i,k}
	+ \frac{h}{2 \pi} \sum_{k=1}^{\infty} \frac{1}{k} \big( X_{i,k} \, 
	Y_{j,k} - X_{j,k} \, Y_{i,k} \big)
\end{align*}
where $a_{i,k} = \frac{\sqrt{h}}{\sqrt{2} \pi k} X_{i,k}$ and $b_{i,k} =
\frac{\sqrt{h}}{\sqrt{2} \pi k} Y_{i,k}$ with independent and identically $\Nd(0,1)$
distributed random variables $X_{i,k}$ and $Y_{i,k}$. For some 
$\Se \in \mathbb{N}$ define the truncated series
\begin{align} \label{A-ij-truncated}
	A_{(i,j)}^{(\Se)}(h) = \Delta W^i(h) \frac{\sqrt{h}}{\sqrt{2} \pi} \sum_{k=1}^{\Se}
	\frac{1}{k} X_{j,k} - \Delta W^j(h) \frac{\sqrt{h}}{\sqrt{2} \pi} \sum_{k=1}^{\Se}
	\frac{1}{k} X_{i,k}
	+ \frac{h}{2 \pi} \sum_{k=1}^{\Se} \frac{1}{k} \big( X_{i,k} \, 
	Y_{j,k} - X_{j,k} \, Y_{i,k} \big) 
\end{align}
and we split the remainder term into two parts denoted as 
\begin{align}
	\label{Reminder-R1-ij}
	R_{(i,j)}^{1,(\Se)}(h) &= \Delta W^i(h) \frac{\sqrt{h}}{\sqrt{2} \pi} 
	\sum_{k=\Se + 1}^{\infty}
	\frac{1}{k} X_{j,k} - \Delta W^j(h) \frac{\sqrt{h}}{\sqrt{2} \pi} 
	\sum_{k=\Se + 1}^{\infty} \frac{1}{k} X_{i,k} \, , \\
	\label{Reminder-R2-ij}
	R_{(i,j)}^{2,(\Se)}(h) &= \frac{h}{2 \pi} \sum_{k=\Se+1}^{\infty} \frac{1}{k} 
	\big( X_{i,k} \, Y_{j,k} - X_{j,k} \, Y_{i,k} \big)
\end{align}
such that $A_{(i,j)}(h) = A_{(i,j)}^{(\Se)}(h) + R_{(i,j)}^{1,(\Se)}(h) 
+ R_{(i,j)}^{2,(\Se)}(h)$ for all $i,j \in \{1, \ldots, m \}$.
\\ \\
An approximation for the L\'{e}vy area $A_{(i,j)}(h)$ can be calculated by simply
truncating the Fourier series after $n$ summands, i.e., calculating 
$A_{(i,j)}^{(\Se)}(h)$. However, as we will see in
Section~\ref{Sec:Vec-reduction-dim}, the conditional distribution of
$R_{(i,j)}^{1,(\Se)}(h)$ given the increments $\Delta W^i(h)$ and $\Delta W^j(h)$
is Gaussian. So, this term can be included for the approximation
as well resulting in some smaller error constant and this is exactly the algorithm 
that has been proposed by Milstein~\cite{Mil95} and it is also considered in
Kloeden, Platen and Wright~\cite{MR1178485} in a broader context, see also
\cite{MR1214374}. For $\Se \in \mathbb{N}$, let 
\begin{align} \label{Iij-Truncated-Fourier-Series-Alg}
	I_{(i,j)}^{FS,(\Se)}(h) = \frac{1}{2} \big( \Delta W^i(h) \Delta W^j(h) - h \, \ind_{i=j}
	\big) + A_{(i,j)}^{(\Se)}(h) + R_{(i,j)}^{1,(\Se)}(h)
\end{align}
denote the truncated Fourier series approximation for $I_{(i,j)}(h)$. Then, 
the sequence
$I_{(i,j)}^{FS,(\Se)}(h)$ converges in $L^2(\Omega)$-norm to $I_{(i,j)}(h)$ 
as $\Se \to \infty$ with
\begin{align} \label{Iij-Truncated-FS-Alg-error}
	\big( \Erw \big( \big| I_{(i,j)}(h) - I_{(i,j)}^{FS,(\Se)}(h) \big|^2 \big) \big)^{1/2} 
	= \bigg( \frac{h^2}{12} - \frac{h^2}{2 \pi^2} \sum_{k=1}^{\Se} \frac{1}{k^2} 
	\bigg)^{1/2}
	\leq \frac{h}{\pi \sqrt{2 \Se}}
\end{align}
for $i, j \in \{1, \ldots, m\}$ with $i \neq j$, see \cite{MR1178485,MR1214374,Mil95}.
The last estimate follows due to $\sum_{k=\Se +1}^{\infty} \frac{1}{k^2} \leq
\frac{1}{\Se}$.
As a result of this, for the discrete time approximation of the solutions of some
SDE with an strong order 1 method like the Milstein scheme \cite{MR1214374,Mil95}
or a stochastic Runge-Kutta method \cite{MR2669396} one has to choose 
$\Se \approx h^{-1}$ in order to preserve the strong order of convergence when 
replacing $I_{(i,j)}(h)$ by $I_{(i,j)}^{FS,(\Se)}(h)$ in the approximation scheme.
\section{Conditional distribution of the truncation terms}
\label{Sec:Vec-reduction-dim}
For the derivation of the approximation algorithm, it is sufficient to approximate
$A_{(i,j)}(h)$ for the case $j>i$ due to properties \eqref{Prop-Levy-Area-ij-ji}
and \eqref{Prop-Levy-Area-ii} of the L\'{e}vy areas. To begin with, we analyze 
the joint conditional distribution of the truncation terms $R_{(i,j)}^{1,(\Se)}(h)$
and $R_{(i,j)}^{2,(\Se)}(h)$ for $j>i$. 
Following the notation in \cite{MR3949104,MR1843055}, we first 
introduce some matrix operations that will be used in the following.
For $B = (b_{i,j}) \in \mathbb{R}^{m \times n}$ and $C \in 
\mathbb{R}^{p \times q}$, let $B \otimes C = (b_{i,j} \, C)$ denote the Kronecker
product of $B$ and $C$, which is in $\mathbb{R}^{mp \times nq}$. Further let 
$\vecop \colon \mathbb{R}^{m \times n} \to \mathbb{R}^{mn \times 1}$ denote
the vectorization operator such that
$\vecop(B) \in \mathbb{R}^{mn \times 1}$ is the vector 
that one obtains by stacking the columns of matrix $B$ one upon the other, i.e.,
\begin{align*}
	\vecop(B) &= \begin{pmatrix} b_{\cdot,1} \\ b_{\cdot,2} \\ \vdots \\ b_{\cdot,n}
		\end{pmatrix} \, .
\end{align*}
For a $m \times n$ matrix, the inverse operator of $\vecop$ is denoted as
$\matop_{m,n} \colon \mathbb{R}^{mn \times 1} \to \mathbb{R}^{m \times n}$ 
such that $\matop_{m,n}(\vecop(B)) = B$.
For simplification of notation, we make use of the permutation matrix $P_m \in 
\mathbb{R}^{m^2 \times m^2}$ defined as 
\begin{align*}
	P_m &= \sum_{i=1}^m \mathbf{e}_i^{\Tt} \otimes (I_m \otimes \mathbf{e}_i) \, , 
\end{align*}
where $\mathbf{e}_i \in \mathbb{R}^m$ denotes the $i$-th unit vector and $I_m$ 
is the $m \times m$ identity matrix. Then, it holds $P_m = P_m^{\Tt}$ and
$P_m \, P_m^{\Tt} = I_{m^2}$, see also
\cite{MR3949104,MR520247,MR1843055}. Moreover, $P_m (u \otimes v) = 
v \otimes u$, $P_m ( B \otimes u) = u \otimes B$ and $P_m (B \otimes C) P_m =
C \otimes B$ for $u, v \in \mathbb{R}^m$ and $B, C \in \mathbb{R}^{m \times m}$, 
see \cite[Theorem~3.1]{MR520247}. 
Due to relation \eqref{Prop-Levy-Area-ij-ji}, it is sufficient to approximate
$A_{(i,j)}(h)$ for $i<j$ and we denote
\begin{align*}
	 \hat{A}(h) &= \big(A_{(1,2)}(h), \ldots, A_{(1,m)}(h), A_{(2,3)}(h), \ldots,
	 A_{(2,m)}(h), \ldots, A_{(l,l+1)}(h), \ldots, \\*	
	 & \quad \quad A_{(l,m)}(h), \ldots, A_{(m-1,m)}(h) \big)^{\Tt} ,
\end{align*}
which is a vector of length $M = \frac{m(m-1)}{2}$.
Next, we introduce the selection matrix $H_m$ of size $M \times m^2$ defined
as
\begin{equation} \label{SelectionMatrix}
	H_m = \begin{pmatrix}
		0_{m-1\times 1} & I_{m-1} & 0_{m-1\times m(m-1)}\\
		0_{m-2\times m+2} & I_{m-2} & 0_{m-2\times m(m-2)}\\
		\vdots   & \vdots & \vdots\\
		0_{m-l\times(l-1)m+l} & I_{m-l} & 0_{m-l\times m(m-l)}\\
		\vdots   & \vdots & \vdots\\
		0_{1\times(m-2)m+m-1} & 1 & 0_{1\times m}
	\end{pmatrix} \, .
\end{equation}
Then, it holds $H_m H_m^{\Tt} = I_M$ and $H_m P_m H_m^{\Tt} = 0_{M 
\times M}$, see \cite{MR1843055}.
If $B$ is some $m \times m$ matrix, then $H_m$ applied to $\vecop(B)$ picks out
exactly those $M$ elements of $\vecop(B)$ that belong to the lower triangle matrix
of $B$, i.e., the elements $B_{i,j}$ with $i>j$, and gives them back in a vector
of length $M$ in the same order as they are given in the vector $\vecop(B)$.
Therefore, it holds 
\begin{align*}
	\hat{A}(h) = H_m \vecop \big( A(h)^{\Tt} \big) \, ,
\end{align*}
which is a vector of length $M$.
Analogously, we can write \eqref{A-ij-truncated} as well as the remainder terms
\eqref{Reminder-R1-ij} and \eqref{Reminder-R2-ij} as 
\begin{align}
	\hat{A}^{(\Se)}(h) &= H_m \vecop \big( {A^{(\Se)}(h)}^{\Tt} \big) \nonumber \\
	\label{A-tilde-n-Vec-Mat}
	&= \frac{\sqrt{h}}{\sqrt{2} \pi} \sum_{k=1}^{\Se} \frac{1}{k} H_m (I_{m^2}-P_m)
	( \Delta W(h) \otimes X_k ) 
	+ \frac{h}{2 \pi} \sum_{k=1}^{\Se} \frac{1}{k} H_m (P_m-I_{m^2})
	( Y_k \otimes X_k ) \\
	\label{R1n-Vec-Mat}
	\hat{R}^{1,(\Se)}(h) &= H_m \vecop \big( {R^{1,(\Se)}}^{\Tt} \big) 
	= \frac{\sqrt{h}}{\sqrt{2} \pi} \sum_{k=\Se +1}^{\infty} \frac{1}{k} H_m (I_{m^2}-P_m)
	( \Delta W(h) \otimes X_k ) \\
	\label{R2n-Vec-Mat}
	\hat{R}^{2,(\Se)}(h) &= H_m \vecop \big( {R^{2,(\Se)}}^{\Tt} \big)
	= \frac{h}{2 \pi} \sum_{k=\Se +1}^{\infty} \frac{1}{k} H_m (P_m-I_{m^2})
	( Y_k \otimes X_k )
\end{align}
with $X_k = (X_{1,k}, \ldots, X_{m,k})^{\Tt}$ and $Y_k = (Y_{1,k}, \ldots,
Y_{m,k})^{\Tt}$.
Here, 
$X_k \sim \Nd(0_m, I_m)$, $Y_k \sim \Nd(0_m, I_m)$ for $k \in \mathbb{N}$ 
and $\Delta W(h) \sim \Nd(0_m, h I_m)$ are all independent Gaussian random
variables. Thus, given $\Delta W(h)$, the random vector $\hat{A}^{(\Se)}(h)$
is conditionally independent from $\hat{R}^{1,(\Se)}(h)$ and $\hat{R}^{2,(\Se)}(h)$.
Further, it holds $\hat{A}(h) = \hat{A}^{(\Se)}(h) + \hat{R}^{1,(\Se)}(h) 
+ \hat{R}^{2,(\Se)}(h)$.
\\ \\
Firstly, we consider the truncation term $\hat{R}^{1,(\Se)}(h)$. The series 
$\sum_{k=\Se +1}^{\infty} \frac{1}{k} X_k$ converges $\Prob$-a.s., see e.g.\
\cite[Theorem~7.5]{MR2767184}. Since the random variables 
$X_{1,k}, \ldots X_{m,k}$ for $k \in \mathbb{N}$ are all i.i.d.\ Gaussian random
variables, it follows that $\sum_{k=\Se +1}^{\infty} \frac{1}{k} X_k$ 
is Gaussian with expectation $0_m$ and covariance matrix 
\begin{align*}
	\Sigma^{1,(\Se)} = \bigg( \frac{\pi^2}{6} - \sum_{k=1}^{\Se} \frac{1}{k^2} \bigg) 
	I_m \, ,
\end{align*}
see also \cite{MR1178485,MR1214374,Mil95}. Thus, we have
\begin{align*}
	\sum_{k=\Se +1}^{\infty} \frac{1}{k} X_k = \big( \Sigma^{1,(\Se)} \big)^{1/2}
	\Psi^{1,(\Se)}
\end{align*}
where $\Psi^{1,(\Se)}$ is a $\Nd(0_m, I_m)$ distributed random vector
given by
\begin{align} \label{RV-Psi-1-n-Exact}
	\Psi^{1,(\Se)} = \big( \Sigma^{1,(\Se)} \big)^{-1/2} 
	\sum_{k=\Se +1}^{\infty} \frac{1}{k} X_k \, .
\end{align}
Therefore, it follows that we can rewrite the truncation term 
$\hat{R}^{1,(\Se)}(h)$ as
\begin{align} \label{Def-hat-A1n}
	\hat{A}^{1,(\Se)}(h) &= \frac{\sqrt{h}}{\sqrt{2} \pi} \bigg( \frac{\pi^2}{6} - \sum_{k=1}^{\Se} \frac{1}{k^2} \bigg)^{1/2} H_m \, (I_{m^2}-P_m) \,
	( \Delta W(h) \otimes \Psi^{1,(\Se)} ) \, .
\end{align}
%
Next, we analyze the second truncation term $\hat{R}^{2,(\Se)}(h)$. We proceed 
in a similar way as in \cite{MR1843055}, however differing in some crucial
points. First of all, for each $k \in \mathbb{N}$ and given the random vector $X_k$ 
one observes that the summand $(P_m - I_{m^2}) (Y_k \otimes X_k)$ is 
conditionally Gaussian distributed with conditional mean 
\begin{align*}
	\Erw( (P_m - I_{m^2}) (Y_k \otimes X_k) \mid X_k) 
	&= (P_m - I_{m^2}) ( \Erw(Y_k) \otimes X_k ) = 0_{m^2}
\end{align*}
and conditional covariance matrix $\Sigma(X_k)$ that is given by
\begin{align*}
	\Sigma(X_k) &= \Erw \big( (P_m - I_{m^2}) (Y_k \otimes X_k) \big( 
	(P_m - I_{m^2}) (Y_k \otimes X_k) \big)^{\Tt} \bigMid X_k \big) \\
	&= (P_m - I_{m^2}) \big( \Erw( Y_k Y_k^{\Tt} \Mid X_k) 
	\otimes (X_k X_k^{\Tt}) \big) (P_m - I_{m^2})^{\Tt} \\
	&= (P_m - I_{m^2}) ( I_{m} \otimes (X_k X_k^{\Tt})) (P_m - I_{m^2})^{\Tt} \, .
\end{align*}
Thus, given $X^{(\Se)} = (X_k)_{k \geq \Se+1}$ the conditional distribution of 
$\hat{R}^{2,(\Se)}$ is Gaussian with conditional mean $0_M$ and conditional
covariance matrix 
\begin{align} \label{Sigma-2-n-Def-Eqn}
	\Sigma^{2,(\Se)}(X^{(\Se)}) &= \frac{h^2}{4 \pi^2} \sum_{k=\Se +1}^{\infty}
	\frac{1}{k^2} H_m \Sigma(X_k) H_m^{\Tt}
\end{align}
since the summands in \eqref{R2n-Vec-Mat} are independent. 
As a result of this, it holds
\begin{align*}
	\hat{R}^{2,(\Se)}(h) = \frac{h}{2 \pi} \bigg( \sum_{k=\Se +1}^{\infty} \frac{1}{k^2} 
	H_m \Sigma(X_k) H_m^{\Tt} \bigg)^{1/2} \Psi^{2,(\Se)}
\end{align*}
with some $\Nd( 0_M, I_M)$ distributed random vector $\Psi^{2,(\Se)}$ that 
is given by
\begin{align} \label{RV-Psi-2-n-Exact}
	\Psi^{2,(\Se)} = \frac{2 \pi}{h} \bigg( \sum_{k=\Se +1}^{\infty} \frac{1}{k^2} 
	H_m \Sigma(X_k) H_m^{\Tt} \bigg)^{-1/2} \hat{R}^{2,(\Se)}(h) \, .
\end{align}
An explicit representation of the conditional covariance matrix $H_m \Sigma(X_k)
H_m^{\Tt}$ is derived in Appendix~\ref{Appendix-A}.
Note that for each $r,s \in \{1, \ldots, M\}$ the series $\frac{h^2}{4 \pi^2}
\sum_{k=\Se +1}^{\infty} \frac{1}{k^2} (H_m \Sigma(X_k) H_m^{\Tt})_{r,s}$ in
\eqref{Sigma-2-n-Def-Eqn} converges absolutely in $L^p(\Omega)$-norm for 
$p \geq 1$ and thus it also converges $\Prob$-a.s.\ \cite[Prop.~8.7]{MR3618289}. 
\\ \\
These considerations build the basis for the construction of our approximation
algorithm for iterated stochastic integrals. 
Note that $\Psi^{1,(\Se)}$ depends only on the random variables $X_k$ for
$k \geq n+1$, while $\Psi^{2,(\Se)}$ is independent from the random variables
$X_k$. Therefore, $\Psi^{1,(\Se)}$ and $\Psi^{2,(\Se)}$ are stochastically 
independent, which is essential for our algorithm.
However, in contrast to truncation term 
$\hat{R}^{1,(\Se)}(h)$ we do not know the conditional distribution of
$\hat{R}^{2,(\Se)}(h)$ completely as it still depends on the random variables 
$X_k$ for $k \geq \Se+1$.
\section{Approximation algorithm for iterated stochastic integrals and 
error estimates}
\label{Sec:Alg}
In order to approximate truncation term $\hat{R}^{2,(\Se)}(h)$, we define the 
$M \times M$ matrix $\Sigma^{2,\infty}$ as the mean of the conditional covariance
matrix of $\hat{R}^{2,(\Se)}(h)$. Due to Lebesgue's dominated convergence
theorem (see, e.g., \cite[Theorem~9.2]{MR1956867}) together with
Appendix~\ref{Appendix-A}, we have
\begin{align}
	\Sigma^{2, \infty} &= \Erw \big( \Sigma^{2,(\Se)}(X^{(\Se)}) \big) 
	\nonumber \\
	&= \frac{h^2}{4 \pi^2} \sum_{k=\Se +1}^{\infty} \frac{1}{k^2} \,
	\Erw \big( H_m \Sigma(X_1) H_m^{\Tt} \big) 
	\nonumber \\
	&= \frac{h^2}{4 \pi^2} \sum_{k=\Se +1}^{\infty} \frac{1}{k^2} \,
	H_m (P_m - I_{m^2}) \big( I_m \otimes \Erw(X_1 X_1^{\Tt}) \big) 
	(P_m - I_{m^2})^{\Tt} H_m^{\Tt} 
	\nonumber \\
	&= \frac{h^2}{4 \pi^2} \sum_{k=\Se +1}^{\infty} \frac{1}{k^2} \, 
	H_m (P_m - I_{m^2}) (P_m - I_{m^2})^{\Tt} H_m^{\Tt} 
	\nonumber \\
	\label{Sigma-Infty}
	&= \frac{h^2}{4 \pi^2} \sum_{k=\Se +1}^{\infty} \frac{1}{k^2} \,
	2 I_M \, .
\end{align}
The matrix $\Sigma^{2, \infty}$ serves as an approximation of the conditional
covariance matrix $\Sigma^{2,(\Se)}(X^{(\Se)})$. Therefore, we define
\begin{align} \label{Def-hat-A2n}
	\hat{A}^{2,(\Se)}(h) &= \big( \Sigma^{2, \infty} \big)^{1/2} \Psi^{2,(n)}
	= \frac{h}{\sqrt{2} \pi} \bigg( \frac{\pi^2}{6} -
	\sum_{k=1}^{\Se} \frac{1}{k^2} \bigg)^{1/2} \Psi^{2,(n)}
\end{align}
as an approximation of the truncation term $\hat{R}^{2,(\Se)}(h)$, where
$\Psi^{2,(\Se)}$ is the $\Nd(0_M, I_M)$ distributed random vector in 
\eqref{RV-Psi-2-n-Exact}. 
\\ \\
Now, we can define the approximation of the iterated stochastic integrals
as follows: For the $m \times m$ matrix $I(h) = ( I_{(i,j)}(h) )_{1 \leq i,j \leq m}$
of all iterated stochastic integrals it holds 
\begin{align*}
	\vecop \big( I(h)^{\Tt} \big) &= \frac{1}{2} \big( \Delta W(h) \otimes \Delta W(h) 
	- h \vecop(I_m) \big) + \vecop \big( A(h)^{\Tt} \big) \, .
\end{align*}
Let
\begin{align*}
	\hat{I}(h) &= H_m \vecop \big( I(h)^{\Tt} \big) \\
	&= \frac{1}{2} H_m \big( \Delta W(h) \otimes \Delta W(h) 
	- h \vecop(I_m) \big) + \hat{A}(h) \, .
\end{align*}
Now, we approximate the $M$-dimensional vector $\hat{I}(h)$ containing 
all iterated stochastic integrals $I_{(i,j)}(h)$ for $j > i$ by the newly proposed
algorithm that is defined by the approximation
\begin{align} \label{Approximation-vec-Version-hat-Iij}
	\hat{I}^{(\Se)}(h) = \frac{1}{2} H_m \big( \Delta W(h) \otimes \Delta W(h) 
	- h \vecop(I_m) \big) + \hat{A}^{(\Se)}(h) + \hat{A}^{1,(\Se)}(h)
	+ \hat{A}^{2,(\Se)}(h)
\end{align}
with $\hat{A}^{(\Se)}(h)$, $\hat{A}^{1,(\Se)}(h)$ and $\hat{A}^{2,(\Se)}(h)$ 
defined in \eqref{A-tilde-n-Vec-Mat}, \eqref{Def-hat-A1n} and 
\eqref{Def-hat-A2n}, respectively.
\\ \\
Now, from \eqref{Approximation-vec-Version-hat-Iij} one can rebuild 
the full $m^2$-dimensional approximation vector by
\begin{align*}
	\vecop \big( I^{(\Se)}(h)^{\Tt} \big) &= \frac{1}{2} \big( \Delta W(h) 
	\otimes \Delta W(h) - h \vecop(I_m) \big) \\
	&\quad + ( I_{m^2} - P_m ) H_m^{\Tt} \big( 
	\hat{A}^{(\Se)}(h) + \hat{A}^{1,(\Se)}(h)
	+ \hat{A}^{2,(\Se)}(h) \big)
\end{align*}
and with $I^{(\Se)}(h) = ( \matop_{m,m} ( \vecop ( I^{(\Se)}(h)^{\Tt} ) ) )^{\Tt}$ 
one obtains the $m \times m$ matrix containing all approximations
$I_{(i,j)}^{(\Se)}$ for $1 \leq i,j \leq m$, which is an approximation of the 
matrix $I(h)$. The approximation $I^{(\Se)}(h)$ is unbiased, i.e., it holds:
\begin{prop} \label{Prop-Mean-error-unbiased}
	For any $\Se \in \mathbb{N}$ it holds for the approximations
	$I_{(i,j)}^{(\Se)}(h)$ of the iterated stochastic integral $I_{(i,j)}(h)$ that
	\begin{equation} \label{Prop-Mean-error-unbiased-eqn}
		\big| \Erw \big( I_{(i,j)}(h) - I_{(i,j)}^{(\Se)}(h) \big) \big| = 0
	\end{equation}
	for all $1 \leq i,j \leq m$ and $h>0$.
\end{prop}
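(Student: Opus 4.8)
\emph{Proof proposal.} The plan is to work with the vectorized objects of Section~\ref{Sec:Vec-reduction-dim} and to reduce \eqref{Prop-Mean-error-unbiased-eqn} to the observation that every random building block of $I^{(\Se)}(h)$ is centered, while the single non-centered term is matched exactly by a corresponding term of $I(h)$. Concretely, using \eqref{Prop-Iij} and \eqref{Approximation-vec-Version-hat-Iij} together with the reconstruction formula for $\vecop\big(I^{(\Se)}(h)^{\Tt}\big)$, one writes both $\vecop\big(I(h)^{\Tt}\big)$ and $\vecop\big(I^{(\Se)}(h)^{\Tt}\big)$ as $\tfrac{1}{2}\big(\Delta W(h)\otimes\Delta W(h)-h\vecop(I_m)\big)$ plus, respectively, $\vecop\big(A(h)^{\Tt}\big)$ and a linear image under $(I_{m^2}-P_m)H_m^{\Tt}$ of $\hat A^{(\Se)}(h)+\hat A^{1,(\Se)}(h)+\hat A^{2,(\Se)}(h)$. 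Since $\Erw\big(\Delta W(h)\otimes\Delta W(h)\big)=h\vecop(I_m)$, the deterministic term $-h\vecop(I_m)$ is matched and cancels in the difference, so it suffices to show that every entry $A_{(i,j)}(h)$ is centered and that $\hat A^{(\Se)}(h)$, $\hat A^{1,(\Se)}(h)$ and $\hat A^{2,(\Se)}(h)$ all have mean $0_M$; linearity then propagates the mean-zero property through $(I_{m^2}-P_m)H_m^{\Tt}$, and \eqref{Prop-Mean-error-unbiased-eqn} follows entrywise for all $1\le i,j\le m$.

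To see that these objects are centered, I would argue component by component. The entries $A_{(i,j)}(h)$ are, by \eqref{LevyArea-Series-Rep}, sums of products such as $a_{i,0}\,\Delta W^j(h)$, $a_{j,0}\,\Delta W^i(h)$ and $k\,(a_{i,k}b_{j,k}-a_{j,k}b_{i,k})$ of \emph{independent} centered Gaussian variables, hence centered; the entries of $\hat A^{(\Se)}(h)$ are finite such sums by \eqref{A-ij-truncated} (equivalently \eqref{A-tilde-n-Vec-Mat}). The vector $\hat A^{1,(\Se)}(h)$ in \eqref{Def-hat-A1n} is a deterministic matrix applied to $\Delta W(h)\otimes\Psi^{1,(\Se)}$, and because $\Psi^{1,(\Se)}$ depends only on $(X_k)_{k\ge\Se+1}$ it is independent of $\Delta W(h)$ and $\Nd(0_m,I_m)$-distributed, so each entry of $\Delta W(h)\otimes\Psi^{1,(\Se)}$ is a product of two independent centered variables. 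Finally $\hat A^{2,(\Se)}(h)=(\Sigma^{2,\infty})^{1/2}\Psi^{2,(\Se)}$ with $\Psi^{2,(\Se)}\sim\Nd(0_M,I_M)$, hence is centered.

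The only point needing justification — and thus the sole, rather minor, obstacle — is the interchange of expectation with the infinite series defining $A_{(i,j)}(h)$, equivalently the series in $\hat R^{1,(\Se)}(h)$ and $\hat R^{2,(\Se)}(h)$. This is legitimate because the series $\sum_{k\ge\Se+1}\tfrac{1}{k} X_k$ converges a.s.\ and in $L^2(\Omega)$, while the series in \eqref{R2n-Vec-Mat} converges absolutely in $L^p(\Omega)$ for every $p\ge1$, as recorded after \eqref{RV-Psi-2-n-Exact}; consequently the relevant partial sums converge in $L^1(\Omega)$, term-by-term expectation applies, and each summand is centered by the independence noted above. Everything else is routine bookkeeping with Kronecker products.
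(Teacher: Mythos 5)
Your argument is correct, and it reaches the conclusion by a slightly different decomposition than the paper. The paper's proof exploits the pathwise identity $\hat{R}^{1,(\Se)}(h) = \hat{A}^{1,(\Se)}(h)$ $\Prob$-a.s.\ (the first remainder is reproduced \emph{exactly}, not merely in distribution), so that the difference $\hat{I}(h)-\hat{I}^{(\Se)}(h)$ collapses to $\hat{R}^{2,(\Se)}(h)-\hat{A}^{2,(\Se)}(h)$, whose expectation vanishes because both terms are (random or constant) matrices applied to the centered vector $\Psi^{2,(\Se)}$, independent of $X^{(\Se)}$. You instead verify that each stochastic building block — $A_{(i,j)}(h)$, $\hat{A}^{(\Se)}(h)$, $\hat{A}^{1,(\Se)}(h)$, $\hat{A}^{2,(\Se)}(h)$ — is individually centered and let linearity do the rest; this is more robust (it never uses that $\hat{A}^{1,(\Se)}$ equals the true remainder a.s.) but obliges you to justify interchanging expectation with the infinite series defining $A_{(i,j)}(h)$, which you do correctly via $L^2$-convergence of the independent, mean-zero summands. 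Two small remarks: the common term $\tfrac12\big(\Delta W(h)\otimes\Delta W(h)-h\vecop(I_m)\big)$ cancels \emph{pathwise} in the difference, so the observation $\Erw\big(\Delta W(h)\otimes\Delta W(h)\big)=h\vecop(I_m)$ is not needed; and the fact recorded after \eqref{RV-Psi-2-n-Exact} concerns the covariance series \eqref{Sigma-2-n-Def-Eqn} rather than \eqref{R2n-Vec-Mat} itself — for the latter the relevant justification is simply that its summands are independent with mean zero and variances of order $k^{-2}$, giving $L^2$- and hence $L^1$-convergence of the partial sums. Neither point affects the validity of your proof.
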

\begin{proof}
	Let $\Se \in \mathbb{N}$ and $h>0$ arbitrarily fixed. For $i=j$ the assertion 
	obviously holds because $I_{(i,i)}(h) = I_{(i,i)}^{(\Se)}(h)$. Therefore, it is
	sufficient to consider the case $i \neq j$. Since $\hat{A}(h) = \hat{A}^{(n)}(h) 
	+ \hat{R}^{1,(\Se)}(h) + \hat{R}^{2,(\Se)}(h)$, it holds
	\begin{align*}
		&\Erw \big( \hat{I}(h) - \hat{I}^{(\Se)}(h) \big) \\
		&= \Erw \big( \hat{A}(h) - \hat{A}^{(\Se)}(h) - \hat{A}^{1,(\Se)}(h)
		- \hat{A}^{2,(\Se)}(h) \big) \\
		&= \Erw \big( \hat{R}^{1,(\Se)}(h) + \hat{R}^{2,(\Se)}(h)
		- \hat{A}^{1,(\Se)}(h) - \hat{A}^{2,(\Se)}(h) \big) \\
		&= \Erw \bigg( \frac{h}{2 \pi} \bigg( \sum_{k=\Se +1}^{\infty} \frac{1}{k^2} 
		H_m \Sigma(X_k) H_m^{\Tt} \bigg)^{1/2} \Psi^{2,(\Se)}
		- \frac{h}{\sqrt{2} \pi} \bigg( \frac{\pi^2}{6} -
		\sum_{k=1}^{\Se} \frac{1}{k^2} \bigg)^{1/2} \Psi^{2,(n)} \bigg) \\
		&= 0
	\end{align*}
	because $\hat{R}^{1,(\Se)}(h) = \hat{A}^{1,(\Se)}(h)$ $\Prob$-a.s.\ and
	since $\Psi^{2,(n)}$ is $\Nd(0_M,I_M)$ distributed.
\end{proof}
\subsection{Error estimates in $L^2$-norm}
In order to analyze the error of the approximation $\hat{I}^{(\Se)}(h)$ given 
by the proposed algorithm in \eqref{Approximation-vec-Version-hat-Iij} subject 
to the parameters $h$ and $\Se \in \mathbb{N}$, we need some auxiliary 
results first.
\begin{prop} \label{Prop-dist-SigmaN-SigmaInfty}
	The sequence of conditional covariance matrices 
	$( \Sigma^{2,(\Se)}(X^{(\Se)}) )_{n \in \mathbb{N}}$ converges to the matrix 
	$\Sigma^{2, \infty}$ in $L^2(\Omega, \| \cdot \|_F)$-norm as $\Se \to \infty$. 
	Further, it holds:
	\begin{enumerate}[(i)]
		\item \label{Prop-dist-SigmaN-SigmaInfty-i}
		For any $\Se \in \mathbb{N}$ and $r \in \{1, \ldots, M\}$ it holds
		\begin{equation} \label{Prop-dist-SigmaN-SigmaInfty-eqn1}
			\sum_{q=1}^M \Erw \big( \big| \big( \Sigma^{2,(\Se)}(X^{(\Se)}) 
			- \Sigma^{2, \infty} \big)_{r,q} \big|^2 \big) = 
			\frac{h^4 m}{8 \pi^4} \sum_{k=\Se+1}^{\infty} k^{-4}
			\, .
		\end{equation}
		\item \label{Prop-dist-SigmaN-SigmaInfty-ii}
		For any $\Se \in \mathbb{N}$ it holds
		\begin{equation} \label{Prop-dist-SigmaN-SigmaInfty-eqn2}
			\Erw \big( \big\| \Sigma^{2,(\Se)}(X^{(\Se)}) - \Sigma^{2, \infty} 
			\big\|_F^2 \big) 
			= \frac{h^4 m^2 (m-1)}{16 \pi^4} \sum_{k=\Se+1}^{\infty} k^{-4}
			\, .
		\end{equation}
	\end{enumerate}
\end{prop}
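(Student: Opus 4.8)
The plan is to compute both sides directly from the explicit formula for the conditional covariance matrix $H_m \Sigma(X_k) H_m^{\Tt}$ provided in Appendix~\ref{Appendix-A}, exploiting the fact that $\Sigma^{2,\infty} = \frac{h^2}{4\pi^2}\sum_{k=n+1}^\infty k^{-2} \cdot 2I_M$ is a scalar multiple of the identity. Writing $c_n := \frac{h^2}{4\pi^2}\sum_{k=n+1}^\infty k^{-2}$, we have
\begin{align*}
	\Sigma^{2,(\Se)}(X^{(\Se)}) - \Sigma^{2,\infty}
	= \frac{h^2}{4\pi^2}\sum_{k=n+1}^\infty k^{-2}\big( H_m\Sigma(X_k)H_m^{\Tt} - 2I_M\big),
\end{align*}
and since the summands are independent with mean zero (by the computation of $\Sigma^{2,\infty}$ in \eqref{Sigma-Infty}, $\Erw(H_m\Sigma(X_k)H_m^{\Tt}) = 2I_M$), the Frobenius-norm expectations and entrywise second moments decompose as a sum over $k$ of the corresponding moments of $H_m\Sigma(X_1)H_m^{\Tt} - 2I_M$, weighted by $\frac{h^4}{16\pi^4}k^{-4}$. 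Thus everything reduces to computing, for a single standard Gaussian vector $X = (X_1,\dots,X_m)^{\Tt} \sim \Nd(0_m,I_m)$, the quantities $\sum_{q=1}^M \Erw\big(|(H_m\Sigma(X)H_m^{\Tt} - 2I_M)_{r,q}|^2\big)$ for fixed $r$ and $\Erw\big(\|H_m\Sigma(X)H_m^{\Tt} - 2I_M\|_F^2\big)$.

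For these single-variable computations I would use the explicit entries of $H_m\Sigma(X)H_m^{\Tt}$ from the appendix. Indexing rows/columns of the $M\times M$ matrix by pairs $(i,j)$ with $i<j$, one expects the diagonal entry at $(i,j)$ to be $X_i^2 + X_j^2$ (up to the normalization already absorbed; the factor 2 in $2I_M$ matches $\Erw(X_i^2+X_j^2)=2$), the off-diagonal entry between $(i,j)$ and $(i,\ell)$ sharing the first index to be $\pm X_j X_\ell$, similarly for pairs sharing an index in other positions, and zero when the two pairs are disjoint. I would then carry out the Gaussian moment bookkeeping: for the diagonal, $\Erw((X_i^2+X_j^2-2)^2) = \Var(X_i^2) + \Var(X_j^2) = 2+2 = 4$; for an off-diagonal entry of the form $X_aX_b$ with $a\neq b$, $\Erw((X_aX_b)^2) = 1$. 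Counting: for a fixed row $r=(i,j)$, there is one diagonal term contributing $4$, and the off-diagonal terms are those pairs sharing exactly one index with $\{i,j\}$ — there are $2(m-2)$ such pairs, each contributing $1$ — giving $\sum_q \Erw(|(\cdot)_{r,q}|^2) = 4 + 2(m-2) = 2m$, which after multiplication by $\frac{h^4}{16\pi^4}\sum_{k=n+1}^\infty k^{-4}$ yields $\frac{h^4 m}{8\pi^4}\sum_{k=n+1}^\infty k^{-4}$ as claimed in (i). Summing over all $r \in \{1,\dots,M\}$ gives $M\cdot 2m = m(m-1)\cdot m = m^2(m-1)$, and multiplying by $\frac{h^4}{16\pi^4}\sum_{k=n+1}^\infty k^{-4}$ gives exactly \eqref{Prop-dist-SigmaN-SigmaInfty-eqn2}, proving (ii); convergence in $L^2(\Omega,\|\cdot\|_F)$ then follows since $\sum_{k=n+1}^\infty k^{-4} \to 0$ as $n\to\infty$.

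The main obstacle I anticipate is purely combinatorial: correctly reading off the structure of $H_m\Sigma(X)H_m^{\Tt}$ from Appendix~\ref{Appendix-A} — in particular, getting the signs right, identifying exactly which index-pairs $(i,j)$ and $(k,\ell)$ produce nonzero off-diagonal entries and of what form, and counting that each row has precisely $2(m-2)$ nonzero off-diagonal entries. The sign of each off-diagonal entry is irrelevant since we only need squared absolute values, which simplifies matters, but the count of neighbors must be exact. I would organize the count by cases according to how the pairs $\{i,j\}$ and $\{i',j'\}$ overlap ($|\{i,j\}\cap\{i',j'\}| \in \{0,1,2\}$) and verify with a small example such as $m=3$ (where $M=3$) or $m=4$ ($M=6$) that the totals $2m$ per row and $m^2(m-1)$ overall are correct. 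The remaining steps — independence of the summands, linearity of expectation, and the identification $c_n^2/(\text{per-}k\text{ weight}) \cdot (\text{single-variable moment})$ — are routine once the single-variable moments are in hand.
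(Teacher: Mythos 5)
Your proposal is correct and follows essentially the same route as the paper: reduce to the per-$k$ variance of $H_m\Sigma(X_1)H_m^{\Tt}-2I_M$ via independence and mean-zero summands, then use the appendix structure (one diagonal entry $X_i^2+X_j^2$ contributing variance $4$ and $2m-4$ off-diagonal entries $\pm X_aX_b$ each contributing $1$) to get $2m$ per row. The only step you leave implicit that the paper makes explicit is the justification for exchanging expectation with the infinite sum over $k$ (dominated convergence / $L^2$-orthogonality), which is routine.
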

\begin{proof}
	Due to \eqref{Sigma-Infty}, it holds $\Erw( \Sigma^{2,(\Se)}(X^{(\Se)}) ) = 
	\Sigma^{2, \infty}$. Let $r \in \{1, \ldots, M\}$ be arbitrarily fixed.
	Then, it follows with Lebesgue's dominated convergence theorem 
	(see, e.g., \cite[Theorem~9.2]{MR1956867}) together
	with Appendix~\ref{Appendix-A} that
	\begin{align*}
		\sum_{q=1}^M \Erw \big( \big| \big( \Sigma^{2,(\Se)}(X^{(\Se)}) 
		- \Sigma^{2, \infty} \big)_{r,q} \big|^2 \big)
		&= \sum_{q=1}^M \Erw \big( \big| \Sigma_{r,q}^{2,(\Se)}(X^{(\Se)}) 
		- \Sigma_{r,q}^{2, \infty} \big|^2 \big) \\
		&= \sum_{q=1}^M \Erw \big( \big| \Sigma_{r,q}^{2,(\Se)}(X^{(\Se)}) 
		- \Erw( \Sigma_{r,q}^{2,(\Se)}(X^{(\Se)}) ) \big|^2 \big) \\
		&= \sum_{q=1}^M \Var \big( \Sigma_{r,q}^{2,(\Se)}(X^{(\Se)}) \big) \\
		&= \frac{h^4}{16 \pi^4} \sum_{q=1}^M \sum_{k=\Se+1}^{\infty} \frac{1}{k^4} 
		\Var \Big( \big( H_m \Sigma(X_1) H_m^{\Tt} \big)_{r,q} \Big)
	\end{align*}
	since $H_m \Sigma(X_k) H_m^{\Tt}$ are i.i.d.\ for $k \geq \Se+1$. Calculating
	the symmetric $M \times M$ matrix $H_m \Sigma(X_1) H_m^{\Tt}$ explicitly, 
	see Appendix~\ref{Appendix-A}, it follows that each row as well as each 
	column has
	exactly $2m-4$ entries of type $\pm X_{i,1} X_{j,1}$, each of them with 
	different indices $i \neq j$, 
	and one entry on the diagonal position of type $X_{i,1}^2 + X_{j,1}^2$ for
	some $i \neq j$. The remaining $M-2m-3$ entries are zero. Therefore, we get
	\begin{align*}
		\sum_{q=1}^M \Var \Big( \big( H_m \Sigma(X_1) H_m^{\Tt} \big)_{r,q} \Big) 
		&= \Var \big( X_{1,1}^2 + X_{2,1}^2 \big) + (2m-4) \Var 
		\big( X_{1,1} X_{2,1} \big) \\
		&= \Erw \big( \big( X_{1,1}^2 + X_{2,1}^2 \big)^2 \big) 
		- \big( \Erw \big( X_{1,1}^2 + X_{2,1}^2 \big) \big)^2 \\
		&\quad + (2m-4) \big( \Erw \big( X_{1,1}^2 X_{2,1}^2 \big) 
		- \big( \Erw \big( X_{1,1} X_{2,1} \big) \big)^2 \big) \\
		&= 2 m \, ,
	\end{align*}
	and (i) follows directly. Moreover, applying (i) we calculate
	\begin{align*}
		\Erw \big( \big\| \Sigma^{2,(\Se)}(X^{(\Se)}) - \Sigma^{2, \infty} \big\|_F^2 \big)
		&= \sum_{r=1}^M \sum_{q=1}^M \Erw \big( \big| \big(
		\Sigma^{2,(\Se)}(X^{(\Se)}) - \Sigma^{2, \infty} \big)_{r,q} \big|^2 \big) \\
		&= \frac{h^4}{16 \pi^4} M \sum_{k=\Se+1}^{\infty} \frac{1}{k^4} \, 2m
	\end{align*}
	which completes the proof of statement (ii).
\end{proof}
\noindent
The following auxiliary Lemma will be useful for the proof of error estimates for the
proposed algorithm. A similar result under slightly different assumptions can be
found in \cite{PleisDiss20,MR1843055}.
\begin{lem} \label{Lem-Sqrt-A-B-estimate}
	Let $A,  B \in \mathbb{R}^{q \times q}$ be symmetric commuting matrices, i.e., $AB=BA$, such that $A$ is  positive semi-definite and $B$ is
	positive definite. Further, let $C \in \mathbb{R}^{r \times q}$ and let 
	$\lambda_{\min}^B >0$ denote the smallest eigenvalue of $B$. Then, it holds:
	\begin{enumerate}[(i)]
		\item \label{Lem-Sqrt-A-B-estimate-eq-2-2}
			$\displaystyle \| C (A^{1/2} - B^{1/2} ) \|_2 
			\leq \tfrac{1}{\sqrt{\lambda_{\min}^B}} \| C (A-B) \|_2$
		\item \label{Lem-Sqrt-A-B-estimate-eq-2-F}
			$\displaystyle \| C (A^{1/2} - B^{1/2} ) \|_2 
			\leq \tfrac{1}{\sqrt{\lambda_{\min}^B}} \| C (A-B) \|_F$
		\item \label{Lem-Sqrt-A-B-estimate-eq-F-F}
			$\displaystyle \| C (A^{1/2} - B^{1/2} ) \|_F 
			\leq \tfrac{1}{\sqrt{\lambda_{\min}^B}} \| C (A-B) \|_F$
	\end{enumerate}
\end{lem}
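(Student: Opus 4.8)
The key observation is that since $A$ and $B$ are symmetric and commute, they are simultaneously diagonalizable: there is an orthogonal matrix $U$ with $A = U D_A U^{\Tt}$ and $B = U D_B U^{\Tt}$, where $D_A = \diag(\alpha_1, \ldots, \alpha_q)$ with $\alpha_i \geq 0$ and $D_B = \diag(\beta_1, \ldots, \beta_q)$ with $\beta_i \geq \lambda_{\min}^B > 0$. Moreover $A^{1/2} = U D_A^{1/2} U^{\Tt}$ and $B^{1/2} = U D_B^{1/2} U^{\Tt}$, so that $A^{1/2} - B^{1/2} = U \, \diag(\sqrt{\alpha_i} - \sqrt{\beta_i}) \, U^{\Tt}$ and $A - B = U \, \diag(\alpha_i - \beta_i) \, U^{\Tt}$. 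The plan is to factor through this common eigenbasis, reduce each of the three inequalities to a pointwise (entrywise in the eigenbasis) comparison, and then invoke submultiplicativity of the relevant norm together with the elementary scalar estimate
\begin{align*}
	\big| \sqrt{\alpha} - \sqrt{\beta} \big| = \frac{|\alpha - \beta|}{\sqrt{\alpha} + \sqrt{\beta}} \leq \frac{|\alpha - \beta|}{\sqrt{\beta}} \leq \frac{|\alpha - \beta|}{\sqrt{\lambda_{\min}^B}}
\end{align*}
valid for $\alpha \geq 0$ and $\beta \geq \lambda_{\min}^B$.

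\textbf{Key steps.} First I would write $M := A^{1/2} - B^{1/2} = U \Lambda U^{\Tt}$ with $\Lambda = \diag(\sqrt{\alpha_i} - \sqrt{\beta_i})$, and similarly $N := A - B = U \Gamma U^{\Tt}$ with $\Gamma = \diag(\alpha_i - \beta_i)$. The scalar estimate above gives $|\Lambda_{ii}| \leq (\lambda_{\min}^B)^{-1/2} |\Gamma_{ii}|$ for every $i$, i.e., $\Lambda = \Gamma S$ where $S = \diag(\sqrt{\alpha_i} - \sqrt{\beta_i})/(\alpha_i - \beta_i)$ (interpreted as $0$ when $\alpha_i = \beta_i$) is diagonal with $\|S\|_2 \leq (\lambda_{\min}^B)^{-1/2}$. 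Hence $M = U \Gamma S U^{\Tt} = (U \Gamma U^{\Tt})(U S U^{\Tt}) = N \cdot T$ where $T := U S U^{\Tt}$ is symmetric with $\|T\|_2 = \|S\|_2 \leq (\lambda_{\min}^B)^{-1/2}$. Therefore $C M = (C N) T$, and now each of the three claims follows from the appropriate submultiplicativity of the spectral and Frobenius norms: for (i), $\|C M\|_2 = \|(CN) T\|_2 \leq \|CN\|_2 \, \|T\|_2 \leq (\lambda_{\min}^B)^{-1/2} \|C(A-B)\|_2$; for (iii), $\|C M\|_F = \|(CN)T\|_F \leq \|CN\|_F \, \|T\|_2 \leq (\lambda_{\min}^B)^{-1/2} \|C(A-B)\|_F$; and (ii) follows from (i) combined with the elementary bound $\|X\|_2 \leq \|X\|_F$ applied to $X = C(A-B)$.

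\textbf{Main obstacle.} There is no serious obstacle here; the argument is essentially a careful bookkeeping exercise. The one point that requires a little care is the handling of eigenvalues where $\alpha_i = \beta_i$ (so the denominator $\alpha_i - \beta_i$ vanishes): there the corresponding entry of $\Lambda$ is also zero, so defining the $i$-th diagonal entry of $S$ to be $0$ is consistent and the factorization $\Lambda = \Gamma S$ remains valid. A secondary point worth stating explicitly is why $\|T\|_2 = \|S\|_2$ and why the submultiplicativity inequalities $\|PQ\|_2 \le \|P\|_2\|Q\|_2$ and $\|PQ\|_F \le \|P\|_F \|Q\|_2$ hold — these are standard, and one may simply cite them or note that the second follows from $\|PQ\|_F^2 = \sum_j \|P q_j\|_2^2 \le \|P\|_F^2 \sum_j$... (more precisely $\|PQ\|_F \le \|P\|_F\|Q\|_2$ via $\|Pq_j\|_2 \le \|P\|_2 \|q_j\|_2$ summed, or the dual version); I would quote these without proof. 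Finally, I would remark that positive definiteness of $B$ is used only to guarantee $\lambda_{\min}^B > 0$ so that the division is legitimate, and positive semi-definiteness of $A$ only to ensure $A^{1/2}$ is the well-defined symmetric square root with nonnegative spectrum.
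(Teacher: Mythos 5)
Your proof is correct and follows essentially the same route as the paper: you factor $C(A^{1/2}-B^{1/2}) = C(A-B)\cdot T$ with $T$ (in the common eigenbasis) equal to $(A^{1/2}+B^{1/2})^{-1}$ up to the harmless modification on eigenspaces where $\alpha_i=\beta_i$, bound $\|T\|_2 \le (\lambda_{\min}^B)^{-1/2}$ via simultaneous diagonalization, and finish with the same submultiplicativity facts $\|PQ\|_2 \le \|P\|_2\|Q\|_2$, $\|PQ\|_F \le \|P\|_F\|Q\|_2$ and $\|\cdot\|_2 \le \|\cdot\|_F$. The only cosmetic difference is that the paper obtains the factorization directly from the identity $(A^{1/2}-B^{1/2})(A^{1/2}+B^{1/2}) = A-B$ rather than entrywise in the eigenbasis.
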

\begin{proof}
	By the assumptions of Lemma~\ref{Lem-Sqrt-A-B-estimate} it follows that
	$A$ and $B$ are normal and that $A^{1/2}+B^{1/2}$ is symmetric and positive
	definite. Moreover, it follows that $A^{1/2} B^{1/2} = B^{1/2} A^{1/2}$. 
	Then, making use of the fact that the applied matrix norm is sub-multiplicative, we get
	\begin{equation} \label{Proof-Lem-Sqrt-A-B-estimate-2-Norm-Fak}
		\begin{split}
		\| C ( A^{1/2} - B^{1/2}) \|_2 &= \| C ( A^{1/2} - B^{1/2}) ( A^{1/2} + B^{1/2})
		( A^{1/2} + B^{1/2})^{-1} \|_2 \\
		&= \| C (A-B) ( A^{1/2} + B^{1/2})^{-1} \|_2 \\
		&\leq \| C (A-B) \|_2 \, \| ( A^{1/2} + B^{1/2})^{-1} \|_2 \, .
		\end{split}
	\end{equation}
	Since $A$ and $B$ are symmetric and commute, they are both simultaneously
	diagonalizable, i.e., there exist a unitary matrix $U$ such that 
	$A=U D_A U^{\Tt}$ and $B= U D_B U^{\Tt}$ where $D_A 
	= \diag ( \lambda_1^A, \ldots, \lambda_q^A)$
	and $D_B = \diag ( \lambda_1^B, \ldots, \lambda_q^B)$ with eigenvalues 
	$\lambda_l^A$ of $A$ and $\lambda_l^B$ of $B$ for $1 \leq l \leq q$.
	Then, it follows
	\begin{equation} \label{Proof-Lem-Sqrt-A-B-estimate-2-Norm-EV}
		\begin{split}
		\| ( A^{1/2} + B^{1/2})^{-1} \|_2 &= \| (U (D_A^{1/2} + D_B^{1/2}) 
		U^{\Tt} )^{-1} \|_2 \\
		&= \| U (D_A^{1/2} + D_B^{1/2})^{-1} U^{\Tt} \|_2 \\
		&= \bigg\| \diag \bigg( \frac{1}{\sqrt{\lambda_1^A} + \sqrt{\lambda_1^B}}, 
		\ldots, \frac{1}{\sqrt{\lambda_q^A} + \sqrt{\lambda_q^B}} \bigg) \bigg\|_2 \\
		&= \max_{1 \leq k \leq q} \frac{1}{\sqrt{\lambda_k^A} + \sqrt{\lambda_k^B}}
		\leq \frac{1}{\sqrt{\lambda_{\min}^B}} \, .
		\end{split}
	\end{equation}
	Then, (i) follows from \eqref{Proof-Lem-Sqrt-A-B-estimate-2-Norm-Fak} and
	\eqref{Proof-Lem-Sqrt-A-B-estimate-2-Norm-EV}. Because $\| H \|_2 \leq 
	\| H \|_F$ for any $H \in \mathbb{R}^{q \times q}$, (ii) follows from (i).
	Finally, it holds
	\begin{equation} \label{Proof-Lem-Sqrt-A-B-estimate-2-Norm-Fa-F2}
		\begin{split}
		\| C ( A^{1/2} - B^{1/2}) \|_F &= \| C ( A^{1/2} - B^{1/2}) ( A^{1/2} + B^{1/2})
		( A^{1/2} + B^{1/2})^{-1} \|_F \\
		&= \| (( A^{1/2} + B^{1/2})^{-1})^{\Tt} (A-B)^{\Tt} C^{\Tt} \|_F \\
		&\leq \| ( A^{1/2} + B^{1/2})^{-1} \|_2 \, \| C (A-B) \|_F \, ,
		\end{split}
	\end{equation}
	and (iii) follows from \eqref{Proof-Lem-Sqrt-A-B-estimate-2-Norm-Fa-F2}
	and \eqref{Proof-Lem-Sqrt-A-B-estimate-2-Norm-EV}.
\end{proof}
\noindent
We consider the error for the approximation of the iterated stochastic integrals
calculated by the proposed algorithm
w.r.t.\ the $L^p(\Omega)$-norm for $p \geq 2$. However, 
we treat the case $p=2$ separately because we can obtain a sharper upper bound
for this case compared to the general case due to a more sophisticated proof.
\begin{thm} \label{Theo-I-In-Fehler-Estimate}
	Let $\Se \in \mathbb{N}$ and let ${I}_{(i,j)}^{(\Se)}(h)$
	be the approximation given by \eqref{Approximation-vec-Version-hat-Iij} 
	of the iterated stochastic integral $I_{(i,j)}(h)$
	for $1 \leq i, j \leq m$. Further, let $I(h) = (I_{(i,j)}(h))_{1 \leq i,j \leq m}$
	and $I^{(\Se)}(h) = ( I_{(i,j)}^{(\Se)}(h) )_{1 \leq i,j \leq m}$ denote the
	corresponding $m \times m$ matrices. Then, it holds:
	\begin{enumerate}[(i)]
		\item
		\begin{equation} \label{Theo-I-In-Fehler-Estimate-eqn1}
			\max_{1 \leq i,j \leq m} \Erw \big( | I_{(i,j)}(h) - I_{(i,j)}^{(\Se)}(h) |^2 \big)
			\leq \frac{h^2 m}{4 \pi^2} \, \frac{\sum_{k=\Se+1}^{\infty} k^{-4}}{
			\sum_{k=\Se+1}^{\infty} k^{-2} } 
		\end{equation}
		\item
		\begin{equation} \label{Theo-I-In-Fehler-Estimate-eqn2}
			\Erw \big( \| I(h) - I^{(\Se)}(h) \|_F^2 \big)
			\leq \frac{h^2 m^2 (m-1)}{4 \pi^2} \, \frac{\sum_{k=\Se+1}^{\infty} k^{-4}}{
			\sum_{k=\Se+1}^{\infty} k^{-2} } 
		\end{equation}
	\end{enumerate}
\end{thm}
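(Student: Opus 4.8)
The plan is to collapse the entire approximation error onto a single random matrix
\[
	D^{(\Se)} := \big(\Sigma^{2,(\Se)}(X^{(\Se)})\big)^{1/2} - \big(\Sigma^{2,\infty}\big)^{1/2}
\]
and then combine Lemma~\ref{Lem-Sqrt-A-B-estimate} with Proposition~\ref{Prop-dist-SigmaN-SigmaInfty}. First I would note that $\hat{R}^{1,(\Se)}(h) = \hat{A}^{1,(\Se)}(h)$ holds $\Prob$-a.s., while \eqref{RV-Psi-2-n-Exact} and \eqref{Def-hat-A2n} give $\hat{R}^{2,(\Se)}(h) = \big(\Sigma^{2,(\Se)}(X^{(\Se)})\big)^{1/2}\Psi^{2,(\Se)}$ and $\hat{A}^{2,(\Se)}(h) = \big(\Sigma^{2,\infty}\big)^{1/2}\Psi^{2,(\Se)}$ with one and the same vector $\Psi^{2,(\Se)}\sim\Nd(0_M,I_M)$; consequently $\hat{I}(h) - \hat{I}^{(\Se)}(h) = \hat{R}^{2,(\Se)}(h) - \hat{A}^{2,(\Se)}(h) = D^{(\Se)}\Psi^{2,(\Se)}$. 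The crucial structural fact, already recorded after \eqref{RV-Psi-2-n-Exact}, is that $\Psi^{2,(\Se)}$ is stochastically independent of $X^{(\Se)} = (X_k)_{k\geq\Se+1}$, hence of $D^{(\Se)}$. Conditioning on $X^{(\Se)}$ and using that the conditional law of $D^{(\Se)}\Psi^{2,(\Se)}$ is then centered Gaussian with covariance $D^{(\Se)}(D^{(\Se)})^{\Tt}$, one obtains $\Erw\big(|\mathbf{e}_r^{\Tt}D^{(\Se)}\Psi^{2,(\Se)}|^2\big) = \Erw\big(\|\mathbf{e}_r^{\Tt}D^{(\Se)}\|_2^2\big)$ for each $r\in\{1,\dots,M\}$, and, after taking a trace, $\Erw\big(\|D^{(\Se)}\Psi^{2,(\Se)}\|_2^2\big) = \Erw\big(\|D^{(\Se)}\|_F^2\big)$.

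Next I would apply Lemma~\ref{Lem-Sqrt-A-B-estimate} with $A = \Sigma^{2,(\Se)}(X^{(\Se)})$, which is symmetric and positive semi-definite, and $B = \Sigma^{2,\infty}$. By \eqref{Sigma-Infty}, $B = \frac{h^2}{2\pi^2}\big(\sum_{k=\Se+1}^{\infty}k^{-2}\big)I_M$ is a positive multiple of the identity, so it is positive definite, commutes with $A$, and has smallest eigenvalue $\lambda_{\min}^B = \frac{h^2}{2\pi^2}\sum_{k=\Se+1}^{\infty}k^{-2}$. For part~(i), choosing $C = \mathbf{e}_r^{\Tt}$ in the second estimate of Lemma~\ref{Lem-Sqrt-A-B-estimate} gives $\|\mathbf{e}_r^{\Tt}D^{(\Se)}\|_2^2 \leq \tfrac{1}{\lambda_{\min}^B}\sum_{q=1}^{M}\big|\big(\Sigma^{2,(\Se)}(X^{(\Se)})-\Sigma^{2,\infty}\big)_{r,q}\big|^2$; taking expectations and inserting \eqref{Prop-dist-SigmaN-SigmaInfty-eqn1} yields exactly the right-hand side of \eqref{Theo-I-In-Fehler-Estimate-eqn1}. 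Since, for $j>i$, the difference $I_{(i,j)}(h)-I_{(i,j)}^{(\Se)}(h)$ is the $r$-th component of $\hat{I}(h)-\hat{I}^{(\Se)}(h)$ for the index $r$ corresponding to the pair $(i,j)$, coincides with its $(j,i)$-counterpart up to sign (by $A_{(i,j)}=-A_{(j,i)}$), and vanishes for $i=j$, statement (i) follows by taking the maximum.

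For part~(ii), the relation between $\hat{I}(h)$ and the full matrix $I(h)$ via $H_m$ and $P_m$, together with $(I_{m^2}-P_m)^2 = 2(I_{m^2}-P_m)$ and $H_m(I_{m^2}-P_m)H_m^{\Tt}=I_M$, gives $\|I(h)-I^{(\Se)}(h)\|_F^2 = 2\,\|\hat{I}(h)-\hat{I}^{(\Se)}(h)\|_2^2$, so it remains to bound $\Erw\big(\|D^{(\Se)}\|_F^2\big)$. The third estimate of Lemma~\ref{Lem-Sqrt-A-B-estimate} with $C = I_M$ gives $\|D^{(\Se)}\|_F^2 \leq \tfrac{1}{\lambda_{\min}^B}\,\|\Sigma^{2,(\Se)}(X^{(\Se)})-\Sigma^{2,\infty}\|_F^2$; taking expectations, inserting \eqref{Prop-dist-SigmaN-SigmaInfty-eqn2}, and multiplying by $2$ reproduces \eqref{Theo-I-In-Fehler-Estimate-eqn2}.

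I expect the only genuinely delicate point to be the conditioning step in the first paragraph, namely making precise that the independence of $\Psi^{2,(\Se)}$ from $X^{(\Se)}$ turns the conditional distribution of $D^{(\Se)}\Psi^{2,(\Se)}$ given $X^{(\Se)}$ into a centered Gaussian with covariance $D^{(\Se)}(D^{(\Se)})^{\Tt}$, together with the routine bookkeeping relating the $M$-vector $\hat{I}(h)$ to the $m\times m$ matrix $I(h)$ and exploiting the antisymmetry of the L\'{e}vy areas. Everything else is a direct substitution of Lemma~\ref{Lem-Sqrt-A-B-estimate} and Proposition~\ref{Prop-dist-SigmaN-SigmaInfty}.
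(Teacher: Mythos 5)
Your proposal is correct and follows essentially the same route as the paper: reduce the error to $\hat{R}^{2,(\Se)}(h)-\hat{A}^{2,(\Se)}(h)=D^{(\Se)}\Psi^{2,(\Se)}$, use the independence of $\Psi^{2,(\Se)}$ from $X^{(\Se)}$ to condition, and then combine Lemma~\ref{Lem-Sqrt-A-B-estimate} with Proposition~\ref{Prop-dist-SigmaN-SigmaInfty} and $\lambda_{\min}^{2,\infty}=\frac{h^2}{2\pi^2}\sum_{k=\Se+1}^{\infty}k^{-2}$. The only (harmless) deviation is in part (ii), where the paper simply multiplies the bound from (i) by the $m^2-m$ nonzero entries, while you derive the same constant directly from \eqref{Prop-dist-SigmaN-SigmaInfty-eqn2} together with the identity $\|I(h)-I^{(\Se)}(h)\|_F^2=2\|\hat{I}(h)-\hat{I}^{(\Se)}(h)\|_2^2$.
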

\begin{proof}
	Let $\Se \in \mathbb{N}$. Because $I_{(i,i)}(h) = I_{(i,i)}^{(\Se)}(h)$ 
	for all $i \in \{1, \ldots, m\}$ and with \eqref{Prop-Levy-Area-ij-ji}, it holds
	\begin{align*}
		\max_{1 \leq i,j \leq m} \Erw \big( | I_{(i,j)}(h) - I_{(i,j)}^{(\Se)}(h) |^2 \big)
		&= \max_{1 \leq r \leq M} \Erw \big( \big| \big( 
		\hat{A}(h) - \hat{A}^{(\Se)}(h)
		- \hat{A}^{1,(\Se)}(h) - \hat{A}^{2,(\Se)}(h) \big)_r \big|^2 \big) \\ 
		&= \max_{1 \leq r \leq M} \Erw \big( \big| \big( 
		\hat{R}^{2,(\Se)}(h) - \hat{A}^{2,(\Se)}(h) \big)_r \big|^2 \big) \\
		&= \max_{1 \leq r \leq M} \Erw \big( \big| \big(
		({\Sigma^{2,(\Se)}(X^{(\Se)})})^{1/2} \Psi^{2,(\Se)}
		- ({\Sigma^{2,\infty}})^{1/2} \Psi^{2,(\Se)} 
		\big)_r \big|^2 \big) \, .
	\end{align*}
	Let $r \in \{1, \ldots, M\}$ be arbitrarily fixed. To simplify notation, let $C(r) \in 
	\mathbb{R}^{1 \times M}$ with $C_{1,r}(r)=1$ and $C_{1,l}(r)=0$ for $l \neq r$.
	Further, denote $S(X^{(\Se)}) = ({\Sigma^{2,(\Se)}(X^{(\Se)})})^{1/2} 
	- ({\Sigma^{2,\infty}})^{1/2}$. 
	%
	We define
	\begin{align*}
		\varphi_r \big( X^{(\Se)}, \Psi^{2,(\Se)} \big) = \sum_{s=1}^M \big(
		({\Sigma^{2,(\Se)}(X^{(n)})})^{1/2} - ({\Sigma^{2,\infty}})^{1/2} \big)_{r,s} 
		\Psi^{2,(\Se)}_s
	\end{align*}
	and we first prove $\big| \varphi_r \big( X^{(\Se)}, \Psi^{2,(\Se)} \big) \big|^2 
	\in L^1(\Omega)$. Since $\Sigma^{2,\infty}$ and $\Sigma^{2,(\Se)}(X^{(\Se)})$ commute, we can apply Lemma~\ref{Lem-Sqrt-A-B-estimate} and get
	\begin{align*}
		\Erw \big( \big| \varphi_r \big( X^{(\Se)}, \Psi^{2,(\Se)} \big) \big|^2 \big) 
		&= \Erw \big( \big\| C(r) \, S(X^{(\Se)}) \, \Psi^{2,(\Se)} \big\|_2^{2} \big) \\
		&\leq \Erw \big( \big\| C(r) \, S(X^{(\Se)}) \big\|_F^2 \big) \, 
		\Erw \big( \big\| \Psi^{2,(\Se)} \big\|_2^{2} \big) \\
		&\leq \big( \lambda_{\min}^{2,\infty} \big)^{-1} \,
		\Erw \big( \big\| C(r) \, \big( \Sigma^{2,(\Se)}(X^{(\Se)})
		- \Sigma^{2,\infty} \big) \big\|_F^{2} \big) \, 
		\Erw \big( \big\| \Psi^{2,(\Se)} \big\|_2^{2} \big) \\
		&= M \, \big( \lambda_{\min}^{2,\infty} \big)^{-1} \,
		\sum_{s=1}^M \Erw \Big( \big( \Sigma^{2,(\Se)}(X^{(\Se)})
		- \Sigma^{2,\infty} \big)_{r,s}^2 \Big)
	\end{align*}
	where $\lambda_{\min}^{2,\infty} = \frac{h^2}{2 \pi^2} \sum_{k=\Se+1}^{\infty}
	\frac{1}{k^2}$ is the minimal eigenvalue of $\Sigma^{2,\infty}$. 
	Now, by applying
	Proposition~\ref{Prop-dist-SigmaN-SigmaInfty}~(\ref{Prop-dist-SigmaN-SigmaInfty-i}) 
	we get that $\Erw \big( \big| \varphi_r \big( X^{(\Se)}, \Psi^{2,(\Se)} \big) \big|^2
	\big)$ is uniformly bounded for all $r \in \{1, \ldots, M\}$ and all $n \in
	\mathbb{N}_0$. Thus, we can take the conditional expectation of 
	$\big| \varphi_r \big( X^{(\Se)}, \Psi^{2,(\Se)} \big) \big|^2$ w.r.t.\ $X^{(\Se)}$.
	%
	%
	Since $\Psi^{2,(\Se)}$ is independent of $X^{(\Se)}$, it follows
	\begin{align*}
		&\Erw \big( \big| \big(
		({\Sigma^{2,(\Se)}(X^{(\Se)})})^{1/2} \Psi^{2,(\Se)}
		- ({\Sigma^{2,\infty}})^{1/2} \Psi^{2,(n)} \big)_r \big|^2 \big) \\
		&= \Erw \bigg( \Erw \bigg( 
		\sum_{s,t=1}^M \big( S(X^{(\Se)}) \big)_{r,s} \Psi^{2,(\Se)}_s
		\big( S(X^{(\Se)}) \big)_{r,t} \Psi^{2,(\Se)}_t \biggMid X^{(\Se)} \bigg) \bigg) \\
		&= \Erw \bigg( \sum_{s=1}^M \big( S(X^{(\Se)}) \big)_{r,s}^2 \bigg) \\
		&= \Erw \big( \big\| C(r) \big( ({\Sigma^{2,(\Se)}(X^{(\Se)})})^{1/2} 
		- ({\Sigma^{2,\infty}})^{1/2} \big) \big\|_F^2 \big) \, .
	\end{align*}
	Together with
	Proposition~\ref{Prop-dist-SigmaN-SigmaInfty}~(\ref{Prop-dist-SigmaN-SigmaInfty-i})
	and Lemma~\ref{Lem-Sqrt-A-B-estimate} we obtain the estimate
	\begin{align*}
		&\Erw \big( \big\| C(r) \big( ({\Sigma^{2,(\Se)}(X^{(\Se)})})^{1/2} 
		- ({\Sigma^{2,\infty}})^{1/2} \big) \big\|_F^2 \big) \\
		&\leq \bigg( \frac{h^2}{2 \pi^2} \sum_{k=\Se+1}^{\infty} \frac{1}{k^2} \bigg)^{-1}
		\Erw \big( \big\| C(r) \big( {\Sigma^{2,(\Se)}(X^{(\Se)})}
		- {\Sigma^{2,\infty}} \big) \big\|_F^2 \big) \\
		&= \bigg(\frac{h^2}{2 \pi^2} \sum_{k=\Se+1}^{\infty} \frac{1}{k^2} \bigg)^{-1}
		\sum_{s=1}^M \Erw \Big( \big| \big( {\Sigma^{2,(\Se)}(X^{(\Se)})}
		- {\Sigma^{2,\infty}} \big)_{r,s} \big|^2 \Big) \\
		&\leq \frac{h^4 m}{8 \pi^4} \bigg(\frac{h^2}{2 \pi^2} \sum_{k=\Se+1}^{\infty}
		\frac{1}{k^2} \bigg)^{-1} \sum_{k=\Se+1}^{\infty} k^{-4} \\
		&= \frac{h^2 m}{4 \pi^2} \bigg( \sum_{k=\Se+1}^{\infty}
		\frac{1}{k^2} \bigg)^{-1} \sum_{k=\Se+1}^{\infty} k^{-4}
	\end{align*}
	which proves (i). Due to $I_{(i,i)}(h) = I_{(i,i)}^{(\Se)}(h)$ for $i=1, \ldots, m$, 
	(ii) follows directly from (i).
\end{proof}
\begin{cor} \label{Cor-I-In-Fehler-Estimate}
	Let $\Se \in \mathbb{N}$ and let ${I}_{(i,j)}^{(\Se)}(h)$
	be the approximation defined in \eqref{Approximation-vec-Version-hat-Iij} 
	of the iterated stochastic integral $I_{(i,j)}(h)$ for $1 \leq i, j \leq m$. 
	Then, it holds:
	\begin{enumerate}[(i)]
		\item 
		\begin{equation} \label{Cor-I-In-Fehler-Estimate-eqn1}
			\max_{1 \leq i,j \leq m} \big( \Erw \big( | I_{(i,j)}(h) - I_{(i,j)}^{(\Se)}(h) |^2 
			\big) \big)^{1/2}
			\leq \frac{\sqrt{m} \, h}{\sqrt{12} \, \pi \, \Se} 
		\end{equation}
		\item 
		\begin{equation} \label{Cor-I-In-Fehler-Estimate-eqn2}
            \big( \Erw \big( \| I(h) - I^{(\Se)}(h) \|_F^2 \big) \big)^{1/2}
            \leq \frac{\sqrt{m-1} \, m \, h}{\sqrt{12} \, \pi \, \Se} 
        \end{equation}
	\end{enumerate}
\end{cor}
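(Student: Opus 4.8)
The plan is to read off Corollary~\ref{Cor-I-In-Fehler-Estimate} from Theorem~\ref{Theo-I-In-Fehler-Estimate}: the right-hand sides in \eqref{Theo-I-In-Fehler-Estimate-eqn1} and \eqref{Theo-I-In-Fehler-Estimate-eqn2} already have the right shape, so the entire task is to bound the tail-series ratio
\[
  \rho_{\Se} := \frac{\sum_{k=\Se+1}^{\infty} k^{-4}}{\sum_{k=\Se+1}^{\infty} k^{-2}}
  \;\le\; \frac{1}{3\,\Se^2}\, .
\]
Once this is in hand, part~(i) follows from \eqref{Theo-I-In-Fehler-Estimate-eqn1} because $\frac{h^2 m}{4\pi^2}\,\rho_{\Se}\le\frac{h^2 m}{12\pi^2\Se^2}$, and taking square roots gives $\frac{\sqrt m\,h}{\sqrt{12}\,\pi\,\Se}$; part~(ii) follows identically from \eqref{Theo-I-In-Fehler-Estimate-eqn2}, using $\sqrt{m^2(m-1)}=m\sqrt{m-1}$.

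For the denominator I would take the elementary lower bound $\sum_{k=\Se+1}^{\infty}k^{-2}\ge\int_{\Se+1}^{\infty}x^{-2}\,\mathrm{d}x=\frac{1}{\Se+1}$, valid since $x\mapsto x^{-2}$ is decreasing. The one point that genuinely needs care is that the matching crude estimate $\sum_{k=\Se+1}^{\infty}k^{-4}\le\int_{\Se}^{\infty}x^{-4}\,\mathrm{d}x=\frac{1}{3\Se^3}$ is \emph{not} sharp enough: combined with the above it would only ask for $\frac1\Se\le\frac1{\Se+1}$, which is false. Instead I would use that $x\mapsto x^{-4}$ is convex together with the midpoint inequality $k^{-4}\le\int_{k-1/2}^{k+1/2}x^{-4}\,\mathrm{d}x$ (valid for every integer $k\ge\Se+1$); summing, the intervals $[k-\tfrac12,k+\tfrac12]$ tile $[\Se+\tfrac12,\infty)$ and one obtains the sharper estimate
\[
  \sum_{k=\Se+1}^{\infty}k^{-4}\;\le\;\int_{\Se+1/2}^{\infty}x^{-4}\,\mathrm{d}x\;=\;\frac{1}{3\,(\Se+\tfrac12)^3}\, .
\]
Combining the two pieces gives $\rho_{\Se}\le\frac{\Se+1}{3(\Se+1/2)^3}$, and the desired bound $\rho_{\Se}\le\frac{1}{3\Se^2}$ then reduces to the polynomial inequality $\Se^2(\Se+1)\le(\Se+\tfrac12)^3$, i.e.\ $\Se^3+\Se^2\le\Se^3+\tfrac32\Se^2+\tfrac34\Se+\tfrac18$, which holds for every $\Se\ge1$. (Alternatively one can replace the convexity step by the elementary termwise bound $k^{-4}\le\tfrac13\big((k-\tfrac12)^{-3}-(k+\tfrac12)^{-3}\big)$ and telescope.)

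The main obstacle is precisely this sharpness issue, not anything structural: the rate $\Theta(\Se^{-2})$ claimed in the Corollary already matches the order of the bound in Theorem~\ref{Theo-I-In-Fehler-Estimate}, so any estimate of $\rho_{\Se}$ that loses a constant factor — or even a lower-order term of the wrong sign, as the naive $\frac{1}{3\Se^3}$ bound does — would be fatal, and the half-integer shift coming from the convexity (or telescoping) argument supplies exactly the slack needed. Everything else is routine: substitute $\rho_{\Se}\le\frac1{3\Se^2}$ into \eqref{Theo-I-In-Fehler-Estimate-eqn1} and \eqref{Theo-I-In-Fehler-Estimate-eqn2} and take square roots, noting that the diagonal entries contribute nothing since $I_{(i,i)}(h)=I_{(i,i)}^{(\Se)}(h)$.
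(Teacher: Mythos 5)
Your proposal is correct and follows essentially the same route as the paper: both reduce the corollary to Theorem~\ref{Theo-I-In-Fehler-Estimate} and bound the tail-series ratio by $\tfrac{1}{3\Se^2}$ via integral comparison with a half-integer shift in the numerator, $\sum_{k=\Se+1}^{\infty}k^{-4}\le\int_{\Se+1/2}^{\infty}x^{-4}\,\mathrm{d}x$. The only (inessential) difference is your lower bound $\sum_{k=\Se+1}^{\infty}k^{-2}\ge\tfrac{1}{\Se+1}$ in place of the paper's $\ge\tfrac{1}{\Se+3/4}$; your final polynomial inequality $\Se^2(\Se+1)\le(\Se+\tfrac12)^3$ still closes the argument for all $\Se\ge1$.
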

\begin{proof}
	The assertions follow from Theorem~\ref{Theo-I-In-Fehler-Estimate} and the
	estimate
	\begin{align*}
		\frac{ \sum_{k=\Se+1}^{\infty} \frac{1}{k^4} }{ \sum_{k=\Se+1}^{\infty} 
		\frac{1}{k^2} } &\leq \frac{ \int_{\Se + \frac{1}{2}}^{\infty} \frac{1}{x^4} \,
		\mathrm{d}x }{ \int_{\Se + \frac{3}{4}}^{\infty} \frac{1}{x^2} \, \mathrm{d}x }
		= \frac{\Se + \frac{3}{4}}{3 (\Se + \frac{1}{2})^3} 
		\leq \frac{1}{3 \Se^2}
	\end{align*}
	for all $\Se \in \mathbb{N}$, see also \cite{MR1843055} for this estimate.
\end{proof}
\subsection{Error estimates in $L^p$-norm}
\noindent
Next to the error estimates in $L^2(\Omega)$-norm, we also 
give some error estimate in $L^p(\Omega)$-norm for arbitrary $p>2$ in the
following. Therefore, we first prove $L^p$ convergence for the
truncated Fourier series approach proposed in \cite{MR1178485,MR1214374,Mil95},
which is a new result on its own.
\begin{prop} \label{Prop-Iij-FS-convergence-Lp}
	Let $\Se \in \mathbb{N}$ and $p \geq 2$. Then, for the truncated Fourier series
	approximation $I_{(i,j)}^{FS,(\Se)}(h)$ in \eqref{Iij-Truncated-Fourier-Series-Alg} 
	it holds
	\begin{align} \label{Prop-Iij-FS-convergence-Lp-eqn}
		\max_{1 \leq i,j \leq m} \big( \Erw \big( \big| I_{(i,j)}(h) 
		- I_{(i,j)}^{FS,(\Se)}(h) \big|^p \big) \big)^{1/p} 
		\leq \frac{(p-1) \, h}{\sqrt{2} \pi} \Big( \Gamma \Big( \frac{p}{2}+1 \Big)
		\Big)^{1/p} \bigg( \frac{\pi^2}{6} - \sum_{k=1}^{\Se} \frac{1}{k^2} \bigg)^{1/2}
	\end{align}
	with equality for $p=2$ if $i \neq j$ and $\big( \Erw \big( \big| I_{(i,i)}(h) 
	- I_{(i,i)}^{FS,(\Se)}(h) \big|^p \big) \big)^{1/p} =0$ for $i \in \{1, \ldots, m\}$.
\end{prop}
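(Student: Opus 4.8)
The plan is to reduce the claim to an $L^p$-estimate for a single remainder term and then to exploit a conditional Gaussian structure. Combining \eqref{Prop-Iij} with the decomposition $A_{(i,j)}(h) = A_{(i,j)}^{(\Se)}(h) + R_{(i,j)}^{1,(\Se)}(h) + R_{(i,j)}^{2,(\Se)}(h)$ and with the definition \eqref{Iij-Truncated-Fourier-Series-Alg} shows that $I_{(i,j)}(h) - I_{(i,j)}^{FS,(\Se)}(h) = R_{(i,j)}^{2,(\Se)}(h)$ for all $i,j \in \{1,\dots,m\}$. For $i=j$ the right-hand side vanishes identically, which proves the last assertion, so it remains to estimate $\big\| R_{(i,j)}^{2,(\Se)}(h) \big\|_{L^p(\Omega)}$ for $i \neq j$, where $R_{(i,j)}^{2,(\Se)}(h) = \frac{h}{2\pi} \sum_{k=\Se+1}^{\infty} \frac{1}{k} \big( X_{i,k} \, Y_{j,k} - X_{j,k} \, Y_{i,k} \big)$.

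The key observation is that, conditionally on $X^{(\Se)} = (X_k)_{k \geq \Se+1}$, the random variable $R_{(i,j)}^{2,(\Se)}(h)$ is centered Gaussian. Indeed, the $Y_k$ are independent of the $X_k$ and mutually independent, so the conditional law is that of a convergent series of independent centered Gaussians, hence Gaussian; and since, given $X_k$, the $k$-th summand $X_{i,k} Y_{j,k} - X_{j,k} Y_{i,k}$ has variance $X_{i,k}^2 + X_{j,k}^2$ while the summands remain conditionally independent, the conditional variance is
\[
	V\big(X^{(\Se)}\big) = \frac{h^2}{4\pi^2} \sum_{k=\Se+1}^{\infty} \frac{1}{k^2} \big( X_{i,k}^2 + X_{j,k}^2 \big) ,
\]
which is exactly the corresponding diagonal entry of the conditional covariance matrix $\Sigma^{2,(\Se)}(X^{(\Se)})$ from \eqref{Sigma-2-n-Def-Eqn}, see Appendix~\ref{Appendix-A}; the $\Prob$-a.s.\ and $L^p$ convergence of the series was already recorded in Section~\ref{Sec:Vec-reduction-dim}. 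Hence, by the tower property, $\Erw \big( \big| R_{(i,j)}^{2,(\Se)}(h) \big|^p \big) = \Erw \big( |\Nd(0,1)|^p \big) \cdot \Erw \big( V(X^{(\Se)})^{p/2} \big)$, i.e.\
\[
	\big\| R_{(i,j)}^{2,(\Se)}(h) \big\|_{L^p(\Omega)} = \| \Nd(0,1) \|_{L^p} \cdot \big\| V(X^{(\Se)}) \big\|_{L^{p/2}(\Omega)}^{1/2} .
\]

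It then remains to estimate the two factors. For the Gaussian factor I would invoke the moment bound $\| \Nd(0,1) \|_{L^p} \leq \sqrt{p-1} \leq p-1$ for $p \geq 2$, which holds with equality at $p=2$. For the second factor, since $p/2 \geq 1$ the triangle inequality in $L^{p/2}(\Omega)$ (extended to the countable sum of nonnegative terms) gives
\[
	\big\| V(X^{(\Se)}) \big\|_{L^{p/2}(\Omega)} \leq \frac{h^2}{4\pi^2} \sum_{k=\Se+1}^{\infty} \frac{1}{k^2} \, \big\| X_{i,k}^2 + X_{j,k}^2 \big\|_{L^{p/2}(\Omega)} = \frac{h^2}{4\pi^2} \bigg( \sum_{k=\Se+1}^{\infty} \frac{1}{k^2} \bigg) \cdot 2 \big( \Gamma(\tfrac{p}{2}+1) \big)^{2/p} ,
\]
where I used that $X_{i,k}^2 + X_{j,k}^2$ is chi-squared distributed with two degrees of freedom, so that $\Erw \big( ( X_{i,k}^2 + X_{j,k}^2 )^{p/2} \big) = 2^{p/2} \Gamma(\tfrac{p}{2}+1)$. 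Combining the two estimates and using $\sum_{k=\Se+1}^{\infty} k^{-2} = \frac{\pi^2}{6} - \sum_{k=1}^{\Se} k^{-2}$ then produces \eqref{Prop-Iij-FS-convergence-Lp-eqn}. For $p=2$ all three steps are identities --- $\| \Nd(0,1) \|_{L^2} = 1 = p-1$, the triangle inequality becomes linearity of expectation, and the conditional representation is exact --- so equality holds for $i \neq j$, consistently with \eqref{Iij-Truncated-FS-Alg-error}.

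The main obstacle I expect is the rigorous handling of the conditioning, i.e.\ ensuring that the infinite series converges $\Prob$-a.s.\ and in $L^p$ and that its conditional law is genuinely $\Nd(0, V(X^{(\Se)}))$, so that the tower-property identity is legitimate. The safest route is to prove the estimate first for the finite partial sums $\frac{h}{2\pi} \sum_{k=\Se+1}^{N} \frac{1}{k} ( X_{i,k} Y_{j,k} - X_{j,k} Y_{i,k} )$, for which everything is a polynomial in finitely many independent Gaussians, and then to pass to the limit $N \to \infty$: these partial sums form an $L^2$-bounded martingale (hence $\Prob$-a.s.\ and $L^2$-convergent to $R_{(i,j)}^{2,(\Se)}(h)$) which, by the uniform-in-$N$ bound just obtained, is also $L^p$-bounded and therefore $L^p$-convergent, so the estimate carries over to the limit. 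Alternatively one may simply rely on the convergence statements already recorded in Section~\ref{Sec:Vec-reduction-dim}; and as a side remark, since $R_{(i,j)}^{2,(\Se)}(h)$ belongs to the second Wiener chaos, Gaussian hypercontractivity would even give the slightly stronger bound $(p-1)\,\| R_{(i,j)}^{2,(\Se)}(h) \|_{L^2(\Omega)}$, but the conditional-Gaussian argument above is more elementary and adapts to the matrix-valued estimates needed later for the newly proposed algorithm.
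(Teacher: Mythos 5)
Your proposal is correct, and it reaches the paper's exact bound by a genuinely different route. You first reduce the error to the algebraic identity $I_{(i,j)}(h) - I_{(i,j)}^{FS,(\Se)}(h) = R_{(i,j)}^{2,(\Se)}(h)$ and then exploit that, conditionally on $X^{(\Se)}$, this remainder is centered Gaussian with variance $\frac{h^2}{4\pi^2}\sum_{k=\Se+1}^{\infty} k^{-2}(X_{i,k}^2+X_{j,k}^2)$ (precisely the diagonal entry of $\Sigma^{2,(\Se)}(X^{(\Se)})$ already identified in Section~\ref{Sec:Vec-reduction-dim}); the tower property then factors the $p$th moment into a standard Gaussian moment times $\Erw(V^{p/2})$. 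The paper instead writes the error as the It\^o integral $\int_0^h \big(W_s^i - \frac{s}{h}W_h^i - \frac{a_{i,0}}{2} - \sum_{k=1}^{\Se}(\cdots)\big)\,\mathrm{d}W_s^j$, applies the Burkholder--Davis--Gundy inequality to produce the factor $(p-1)^p$, and then uses Parseval's identity to convert the quadratic variation into the tail $\frac{h}{2}\sum_{k>\Se}(|a_{i,k}|^2+|b_{i,k}|^2)$; from that point on both arguments coincide (Minkowski in $L^{p/2}(\Omega)$ plus Lemma~\ref{Lem-Momente-Chi2} for the $\chi^2_2$ moments). What your route buys is that it avoids BDG and Parseval entirely, it is structurally the same conditional-Gaussian argument the paper deploys later for Theorem~\ref{Theo-Lp-Error-Estimates}, and it actually yields the sharper constant $\sqrt{p-1}$ in place of $p-1$ before you deliberately weaken it to match \eqref{Prop-Iij-FS-convergence-Lp-eqn}; what it costs is the need to justify (i) the bound $\|\Nd(0,1)\|_{L^p}\le\sqrt{p-1}$, which you invoke without proof --- it is the first-chaos hypercontractivity estimate and can also be checked directly from Lemma~\ref{Lem-Momente-Normalvert}, but it is the one external ingredient you should cite or verify --- and (ii) the legitimacy of the conditional-Gaussian representation of the infinite series, which you correctly handle by the partial-sum/martingale limiting argument (or by citing the corresponding statement in Section~\ref{Sec:Vec-reduction-dim}). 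The $i=j$ case and the $p=2$ equality both come out exactly as in the paper, consistent with \eqref{Iij-Truncated-FS-Alg-error}.
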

\noindent
For the proof of Proposition~\ref{Prop-Iij-FS-convergence-Lp}, the following
lemma on the $p$th moment of $\chi^2_2$ distributed random variables is needed
first.
\begin{lem} \label{Lem-Momente-Chi2}
	Let $X$ and $Y$ be independent $\Nd(0,1)$ distributed random variables
	on $(\Omega, \mathcal{F}, \Prob)$, let $p > -1$ and $c \in \mathbb{R}$. 
	Then, for the $\chi^2$ distributed 
	random variable $X^2+Y^2$ with $2$ degrees of freedom it holds
	\begin{align*}
	\Erw \big( \big| X^2+Y^2 - c \big|^p \big) 
	= 2^p \, e^{-\frac{c}{2}} \bigg( \Gamma(p+1) + \int_0^{\frac{c}{2}} 
	|t|^p \, e^t \, \mathrm{d}t \bigg) \, .
	\end{align*}
\end{lem}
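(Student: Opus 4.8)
The plan is to reduce the statement to a one–dimensional elementary integral. The key observation is that for independent $\Nd(0,1)$ variables $X,Y$ the sum of squares $X^2+Y^2$ is $\chi^2$-distributed with $2$ degrees of freedom, which is the same as an exponential law of mean $2$; hence $X^2+Y^2$ has Lebesgue density $x \mapsto \tfrac12 e^{-x/2}\,\ind_{x \geq 0}$. Writing the expectation as an integral and substituting $x = 2s$ gives
\begin{align*}
	\Erw\big(|X^2+Y^2-c|^p\big)
	= \int_0^\infty |x-c|^p \, \tfrac12 e^{-x/2}\,\mathrm{d}x
	= 2^p \int_0^\infty \big| s - \tfrac{c}{2} \big|^p e^{-s}\,\mathrm{d}s \, ,
\end{align*}
so it remains to evaluate $F(a) := \int_0^\infty |s-a|^p e^{-s}\,\mathrm{d}s$ for $a = c/2$.

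\textbf{Main computation.} For $a > 0$ I would split $F(a)$ at $s=a$. On the tail $[a,\infty)$ the substitution $u = s-a$ gives $\int_a^\infty (s-a)^p e^{-s}\,\mathrm{d}s = e^{-a}\int_0^\infty u^p e^{-u}\,\mathrm{d}u = e^{-a}\,\Gamma(p+1)$, the integral converging precisely because $p > -1$. On the bounded piece $[0,a]$ the substitution $t = a-s$ gives $\int_0^a (a-s)^p e^{-s}\,\mathrm{d}s = e^{-a}\int_0^a t^p e^{t}\,\mathrm{d}t$, again finite since $p > -1$. Adding the two contributions and using $t^p = |t|^p$ on $[0,a]$ yields $F(a) = e^{-a}\big(\Gamma(p+1) + \int_0^a |t|^p e^{t}\,\mathrm{d}t\big)$; multiplying by $2^p$ and recalling $a = c/2$ gives the claimed identity when $c > 0$. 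The case $c = 0$ is the special value $F(0) = \Gamma(p+1)$ and is covered by the same formula.

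\textbf{The sign case.} For $c \le 0$, i.e.\ $a \le 0$, one has $s-a \ge 0$ for all $s \ge 0$, so no splitting is needed; the substitution $u = s-a$ directly gives $F(a) = e^{-a}\int_{-a}^\infty u^p e^{-u}\,\mathrm{d}u = e^{-a}\big(\Gamma(p+1) - \int_0^{-a} u^p e^{-u}\,\mathrm{d}u\big)$, and the change of variables $u = -t$ in the remaining integral, together with the convention $\int_0^a = -\int_a^0$ for $a<0$, shows that $-\int_0^{-a} u^p e^{-u}\,\mathrm{d}u = \int_0^a |t|^p e^{t}\,\mathrm{d}t$, so the formula holds in this case as well. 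The whole argument is routine; the only point that needs a little care is the bookkeeping of signs and of the orientation of $\int_0^{c/2}$ when $c < 0$, and keeping track of the hypothesis $p > -1$ to guarantee convergence of every integral that appears.
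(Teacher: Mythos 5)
Your proof is correct and follows essentially the same route as the paper: both reduce the expectation to $2^p\int_0^\infty |s-c/2|^p e^{-s}\,\mathrm{d}s$ and then split/substitute to isolate $\Gamma(p+1)$ and the finite integral $\int_0^{c/2}|t|^p e^t\,\mathrm{d}t$. The only cosmetic difference is that you invoke the known exponential density of a $\chi^2_2$ variable where the paper derives it on the spot via polar coordinates, and you treat the sign of $c$ slightly more explicitly.
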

\begin{proof}
	Let $p > -1$ and $c \in \mathbb{R}$ be fixed. Using polar coordinates, we
	calculate due to the independence of $X$ and $Y$ that
	\begin{align*}
	\Erw \big( \big| X^2+Y^2 - c \big|^p \big) &= \frac{1}{2 \pi} 
	\int_{-\infty}^{\infty} \int_{-\infty}^{\infty} \big| x^2 + y^2 -c \big|^p 
	\, e^{-\frac{1}{2} ( x^2+y^2 )} \, \mathrm{dx} \, \mathrm{d}y \\
	&= \frac{2}{\pi} \int_{0}^{\infty} \int_{0}^{\infty} \big| x^2 + y^2 -c \big|^p 
	\, e^{-\frac{1}{2} ( x^2+y^2 )} \, \mathrm{dx} \, \mathrm{d}y \\
	&= \frac{2}{\pi} \int_{0}^{\frac{\pi}{2}} \int_{0}^{\infty} \big| r^2 -c \big|^p 
	\, r \, e^{-\frac{1}{2} r^2} \, \mathrm{dr} \, \mathrm{d}\varphi \\
	&= \int_{0}^{\infty} \big| r^2 -c \big|^p 
	\, r \, e^{-\frac{1}{2} r^2} \, \mathrm{dr} \, .
	\end{align*}
	Substituting $z = \frac{1}{2} (r^2 -c)$, we get
	\begin{align*}
	\int_{0}^{\infty} \big| r^2 -c \big|^p \, r \, e^{-\frac{1}{2} r^2} \, \mathrm{dr}
	&= 2^p \, e^{-\frac{c}{2}} \bigg( \int_{-\frac{c}{2}}^0 |z|^p \, e^{-z} \,
	\mathrm{d}z 
	+ \int_0^{\infty} z^p \, e^{-z} \, \mathrm{d}z \bigg)
	\, .
	\end{align*} 
	Substituting $t=-z$ in the first integral and expressing the second integral
	by the gamma function completes the proof.
\end{proof}
\noindent
Note that one may choose $c=0$ (or $c=\Erw \big(X^2+Y^2 \big) = 2$) in 
order to get a formula for the $p$th absolute moment of a (centered) 
$\chi^2_2$ random variable. 
\begin{proof}[Proof of Proposition~\ref{Prop-Iij-FS-convergence-Lp}]
	Let $\Se \in \mathbb{N}$ and $p \geq 2$ be arbitrarily fixed. Then, it holds for
	$i,j \in \{1, \ldots, m\}$ with $i \neq j$ that
	\begin{align*}
		I_{(i,j)}(h) - I_{(i,j)}^{FS,(\Se)}(h) 
		&= \int_0^h \int_0^s \, \mathrm{d}W_u^i \, \mathrm{d}W_s^j \\
		&\quad - \Big( \frac{1}{2} W_h^i \, W_h^j
		+ \frac{a_{i,0}}{2} W_h^j - \frac{a_{j,0}}{2} W_h^i + \pi \sum_{k=1}^{\Se}
		k \big( a_{i,k} \, b_{j,k} - a_{j,k} \, b_{i,k} \big) \Big) \\
		&= \int_0^h W_s^i - \Big( 
		\frac{s}{h} W_h^i + \frac{a_{i,0}}{2} + \sum_{k=1}^{\Se} a_{i,k} \, \cos 
		\Big( \frac{2 \pi k s}{h} \Big) + b_{i,k} \, \sin \Big( \frac{2 \pi k s}{h} \Big) \Big)
		\, \mathrm{d}W_s^j  .
	\end{align*}
	With the Burkholder-Davis-Gundy inequality we get
	\begin{align*}
		&\Erw \big( \big| I_{(i,j)}(h) - I_{(i,j)}^{FS,(\Se)}(h) \big|^p \big) \\
		&\leq (p-1)^p \, \Erw \bigg( \bigg( \int_0^h \bigg| W_s^i - \frac{s}{h} W_h^i
		- \Big( \frac{a_{i,0}}{2} + \sum_{k=1}^{\Se} a_{i,k} \, \cos \Big( \frac{2 \pi k s}{h}
		\Big) + b_{i,k} \, \sin \Big( \frac{2 \pi k s}{h} \Big) \Big) \bigg|^2 \, \mathrm{ds} 
		\bigg)^{p/2} \bigg) .
	\end{align*}
	Taking into account that $\big\{ \frac{1}{\sqrt{h}}, \sqrt{\frac{2}{h}} \sin \big( 
	\frac{2 \pi k s}{h} \big), \sqrt{\frac{2}{h}} \cos \big( \frac{2 \pi k s}{h} \big) , 
	\ k \in \mathbb{N}, \ s \in [0,h] \big\}$ is an orthonormal basis of $L^2([0,h])$,
	that $\tilde{W}^i = (W_s^i - \frac{s}{h} W_h^i)_{s \in [0,h]}$ has $\Prob$-a.s.\
	continuous
	paths and thus belongs $\Prob$-a.s.\ to $L^2([0,h])$, we get with Parseval's
	equality, Lebesgue's dominated convergence theorem and triangle inequality
	for the right hand side
	\begin{align*}
		&(p-1)^p \, \Erw \bigg( \bigg( \int_0^h \bigg| W_s^i - \frac{s}{h} W_h^i
		- \Big( \frac{a_{i,0}}{2} + \sum_{k=1}^{\Se} a_{i,k} \, \cos \Big( \frac{2 \pi k s}{h}
		\Big) + b_{i,k} \, \sin \Big( \frac{2 \pi k s}{h} \Big) \Big) \bigg|^2 \, \mathrm{ds} 
		\bigg)^{p/2} \bigg) \\
		&= (p-1)^p \, \Big( \frac{h}{2} \Big)^{p/2} \Erw \bigg( \bigg( 
		\sum_{k=\Se +1}^{\infty} |a_{i,k} |^2 + |b_{i,k} |^2 \bigg)^{p/2} \bigg) \\
		&\leq (p-1)^p \, \Big( \frac{h}{2} \Big)^{p/2} \bigg( \sum_{k=\Se +1}^{\infty} 
		\Big( \Erw \Big( \big( |a_{i,k} |^2 + |b_{i,k} |^2 \big)^{p/2} \Big) \Big)^{2/p} 
		\bigg)^{p/2}
	\end{align*}
	because $\Erw ( \| \tilde{W}^i \|_{L^2([0,h])}^{p} ) < \infty$.
	Since $a_{i,k}, b_{i,k} \sim \Nd \big( 0, \frac{h}{2 \pi^2 k^2} \big)$ are i.i.d., 
	it follows that $X_{i,k} = \frac{\sqrt{2} \pi k}{\sqrt{h}} a_{i,k} \sim \Nd(0,1)$ and 
	$Y_{i,k} = \frac{\sqrt{2} \pi k}{\sqrt{h}} b_{i,k} \sim \Nd(0,1)$ are i.i.d. Then, 
	due to Lemma~\ref{Lem-Momente-Chi2} it holds
	\begin{align*}
		&(p-1)^p \, \Big( \frac{h}{2} \Big)^{p/2} \bigg( \sum_{k=\Se+1}^{\infty} 
		\Big( \Erw \Big( \big( |a_{i,k} |^2 + |b_{i,k} |^2 \big)^{p/2} \Big) \Big)^{2/p} 
		\bigg)^{p/2} \\
		&= (p-1)^p \, \Big( \frac{h}{2} \Big)^{p/2} \bigg( \sum_{k=\Se +1}^{\infty} 
		\frac{h}{2 \pi^2 k^2} \Big( \Erw \Big( \big( X_{i,k}^2 + Y_{i,k}^2 \big)^{p/2} 
		\Big) \Big)^{2/p} \bigg)^{p/2} \\
		&= (p-1)^p \, \Big( \frac{h}{2 \pi} \Big)^{p} \bigg( \sum_{k=\Se +1}^{\infty} 
		\frac{1}{k^2} \bigg)^{p/2} 2^{p/2} \, \Gamma \Big( \frac{p}{2} +1 \Big) \, .
	\end{align*}
	The case $i=j$ follows directly due to $A_{(i,i)}^{(\Se)}(h)=0$ and
	$R_{(i,i)}^{1,(\Se)}(h) = 0$ for all $\Se \in \mathbb{N}$.
\end{proof}
\noindent
Now, we state the main result of this section on the approximation error 
of $\hat{I}^{(\Se)}(h)$ in \eqref{Approximation-vec-Version-hat-Iij} for the 
iterated stochastic integral $\hat{I}(h)$ in the $L^p(\Omega)$-norm.
\begin{thm} \label{Theo-Lp-Error-Estimates}
    Let $p > 2$ and $\Se \in \mathbb{N}$. Then, for the approximation
    ${I}_{(i,j)}^{(\Se)}(h)$ defined in \eqref{Approximation-vec-Version-hat-Iij}
    of the iterated stochastic integral $I_{(i,j)}(h)$ for $1 \leq i, j \leq m$ 
    and for the corresponding $m \times m$ matrices 
    $I(h) = (I_{(i,j)}(h))_{1 \leq i,j \leq m}$ and 
    $I^{(\Se)}(h) = ( I_{(i,j)}^{(\Se)}(h) )_{1 \leq i,j \leq m}$ it holds
    \begin{enumerate}[(i)]
        \item \label{Theo-Lp-Error-Estimates-i}
        \begin{equation} 
        	\max_{1 \leq i,j \leq m} \big( \Erw \big( | I_{(i,j)}(h) - I_{(i,j)}^{(\Se)}(h) |^p 
        	\big) \big)^{1/p}
        	\leq c_{m,p} \frac{\sqrt{p-1} h}{\pi^{\frac{2p+1}{2p}}} \, 
        	\bigg( \frac{\sum_{k=\Se+1}^{\infty} k^{-4}}{\sum_{k=\Se+1}^{\infty} k^{-2} }
        	\bigg)^{1/2} , 
        \end{equation}
        \item \label{Theo-Lp-Error-Estimates-ii}
        \begin{equation} 
        	\big( \Erw \big( \| I(h) - I^{(\Se)}(h) \|_F^p \big) \big)^{1/p}
        	\leq c_{m,p} \frac{\sqrt{(p-1) (m^2-m)} h}{\pi^{\frac{2p+1}{2p}}} \, 
        	\bigg( \frac{\sum_{k=\Se+1}^{\infty} k^{-4}}{\sum_{k=\Se+1}^{\infty} k^{-2} }
        	\bigg)^{1/2} ,
        \end{equation}
    \end{enumerate}
	where the constant $c_{m,p}$ is given by
	\begin{align*}
		c_{m,p} &= \big( \Gamma \big( \tfrac{p+1}{2} \big) \big)^{1/p}
		\bigg( e^{-2/p} \bigg( \Gamma(p+1) + \frac{e}{p+1} \bigg)^{2/p}
		+ \frac{2m-4}{\pi^{2/p}} \, \big( 
		\Gamma \big( \tfrac{p+1}{2} \big) \big)^{4/p} \bigg)^{1/2} .
	\end{align*}
\end{thm}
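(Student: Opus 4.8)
The plan is to run the argument of the proof of Theorem~\ref{Theo-I-In-Fehler-Estimate} in $L^p(\Omega)$ instead of $L^2(\Omega)$, replacing the variance computations by Minkowski's inequality, Lemma~\ref{Lem-Sqrt-A-B-estimate} and the $\chi^2_2$-moment formula of Lemma~\ref{Lem-Momente-Chi2}. First I would reduce to a single component: since $I_{(i,i)}(h) = I_{(i,i)}^{(\Se)}(h)$ and $|I_{(i,j)}(h) - I_{(i,j)}^{(\Se)}(h)| = |I_{(j,i)}(h) - I_{(j,i)}^{(\Se)}(h)|$ by \eqref{Prop-Levy-Area-ij-ji}, it suffices to bound $\Erw\big(|(\hat R^{2,(\Se)}(h) - \hat A^{2,(\Se)}(h))_r|^p\big)$ for each $r \in \{1,\dots,M\}$; all other contributions to $\hat A(h) - \hat A^{(\Se)}(h) - \hat A^{1,(\Se)}(h) - \hat A^{2,(\Se)}(h)$ cancel because $\hat R^{1,(\Se)}(h) = \hat A^{1,(\Se)}(h)$ $\Prob$-a.s. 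Writing $S = (\Sigma^{2,(\Se)}(X^{(\Se)}))^{1/2} - (\Sigma^{2,\infty})^{1/2}$, this component is $\xi_r = \sum_{s=1}^M S_{r,s}\,\Psi^{2,(\Se)}_s$.

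Next I would condition on $X^{(\Se)}$. As in the case $p=2$, one first verifies $\Erw(|\xi_r|^p) < \infty$ via Lemma~\ref{Lem-Sqrt-A-B-estimate} and the finiteness of all moments of the entries of $\Sigma^{2,(\Se)}(X^{(\Se)})$ (tail series in $k^{-2}$ multiplying polynomials in i.i.d.\ standard Gaussians). Since $\Psi^{2,(\Se)} \sim \Nd(0_M, I_M)$ is independent of $X^{(\Se)}$, the conditional law of $\xi_r$ given $X^{(\Se)}$ is centred Gaussian with variance $\sum_{s=1}^M S_{r,s}^2$, whence
\[ \Erw(|\xi_r|^p) = \Erw(|Z|^p)\, \Erw\Big( \big( \textstyle\sum_{s=1}^M S_{r,s}^2 \big)^{p/2} \Big), \qquad Z \sim \Nd(0,1), \]
with $\Erw(|Z|^p) = \tfrac{2^{p/2}}{\sqrt\pi}\,\Gamma(\tfrac{p+1}{2})$. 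Because $\Sigma^{2,\infty} = \tfrac{h^2}{2\pi^2}\big(\sum_{k=\Se+1}^{\infty} k^{-2}\big) I_M$ is a positive multiple of the identity it commutes with the positive semi-definite matrix $\Sigma^{2,(\Se)}(X^{(\Se)})$, and Lemma~\ref{Lem-Sqrt-A-B-estimate}~(iii), applied with $C$ the $r$th standard unit row vector, gives (for every realization of $X^{(\Se)}$) the bound $\big(\sum_s S_{r,s}^2\big)^{1/2} \le \big(\tfrac{h^2}{2\pi^2}\sum_{k=\Se+1}^{\infty} k^{-2}\big)^{-1/2} \big(\sum_s (\Sigma^{2,(\Se)}(X^{(\Se)}) - \Sigma^{2,\infty})_{r,s}^2\big)^{1/2}$.

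It then remains to estimate the $L^{p/2}(\Omega)$-norm of $\sum_s (\Sigma^{2,(\Se)}(X^{(\Se)}) - \Sigma^{2,\infty})_{r,s}^2$. By the explicit form of $H_m\Sigma(X_k)H_m^\Tt$ in Appendix~\ref{Appendix-A}, the $r$th row of $\Sigma^{2,(\Se)}(X^{(\Se)}) - \Sigma^{2,\infty}$ equals $\tfrac{h^2}{4\pi^2}\sum_{k=\Se+1}^{\infty} k^{-2} D_k$ with i.i.d.\ centred random row vectors $D_k$, each having one entry $X_{a,k}^2 + X_{b,k}^2 - 2$ at position $r$ and $2m-4$ entries $\pm X_{a,k}X_{b,k}$ with $a\ne b$, the remaining entries vanishing. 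Applying Minkowski's inequality in $L^{p/2}(\Omega)$ to the sum over $s$, then a Burkholder--Davis--Gundy / martingale moment inequality for the tail sums over $k$ of independent centred random variables (this is where the factor $\sqrt{p-1}$ enters, in analogy with the constant used in the proof of Proposition~\ref{Prop-Iij-FS-convergence-Lp}), then Minkowski once more, I would bound each of the two occurring tail sums by $\sqrt{p-1}\,\big(\sum_{k=\Se+1}^{\infty} k^{-4}\big)^{1/2}$ times $\|X_{1,1}^2 + X_{2,1}^2 - 2\|_{L^p}$ respectively $\|X_{1,1}X_{2,1}\|_{L^p}$. These two $L^p$-norms are evaluated by Lemma~\ref{Lem-Momente-Chi2} with $c=2$ (using $\int_0^1 t^p e^t\,\mathrm{d}t \le e/(p+1)$), which gives $\|X_{1,1}^2 + X_{2,1}^2 - 2\|_{L^p}^2 \le 4 e^{-2/p}(\Gamma(p+1) + e/(p+1))^{2/p}$, and by $\Erw|X_{1,1}X_{2,1}|^p = (\Erw|X_{1,1}|^p)^2$, which gives $\|X_{1,1}X_{2,1}\|_{L^p}^2 = 4\pi^{-2/p}\Gamma(\tfrac{p+1}{2})^{4/p}$. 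Recombining the single diagonal-type and the $2m-4$ off-diagonal-type contributions reproduces exactly the bracket in $c_{m,p}$, and collecting the powers of $2$, $\pi$ and $h$ coming from $\Erw(|Z|^p)^{1/p}$, from $\big(\tfrac{h^2}{2\pi^2}\sum k^{-2}\big)^{-1/2}$ and from the prefactor $\tfrac{h^2}{4\pi^2}$ gives the asserted bound in~(i). Part~(ii) follows from~(i) together with $\|I(h) - I^{(\Se)}(h)\|_F^2 = 2\sum_{r=1}^M |\xi_r|^2$, one more application of Minkowski's inequality in $L^{p/2}(\Omega)$, and $2M = m^2 - m$.

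The step I expect to be the main obstacle is pinning down the constant. One must carry out the conditional-Gaussian step rigorously (in particular the preliminary $L^p$-integrability of $\xi_r$), and one must arrange the moment inequality for the tail sums over $k$ so that $\sqrt{p-1}$ is extracted \emph{uniformly} for the diagonal-type and the off-diagonal-type entries --- for the off-diagonal entries a sharper, conditioning-based estimate is available, but using it would destroy the uniform form of $c_{m,p}$. Apart from that, the proof is careful bookkeeping of the $2$-, $\pi$- and $\Gamma$-factors through Lemma~\ref{Lem-Sqrt-A-B-estimate}, the two applications of Minkowski's inequality, and Lemma~\ref{Lem-Momente-Chi2}.
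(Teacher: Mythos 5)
Your proposal follows essentially the same route as the paper's proof: reduce to the single remainder component $(\hat R^{2,(\Se)}-\hat A^{2,(\Se)})_r$, verify $L^p$-integrability, exploit the conditional Gaussianity given $X^{(\Se)}$ to factor out $\Erw(|Z|^p)$, pass from the difference of square roots to the difference of the covariance matrices via Lemma~\ref{Lem-Sqrt-A-B-estimate} with the minimal eigenvalue of $\Sigma^{2,\infty}$, and then combine Minkowski's inequality with a Burkholder-type martingale moment inequality (constant $p-1$) and the explicit $\chi^2_2$ and Gaussian moment formulas of Lemmas~\ref{Lem-Momente-Chi2} and~\ref{Lem-Momente-Normalvert} to assemble $c_{m,p}$; part~(ii) is obtained from~(i) exactly as in the paper. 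The bookkeeping you outline reproduces the paper's constants, so no substantive difference remains.
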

\noindent
For the proof of Theorem~\ref{Theo-Lp-Error-Estimates} we need the following
result that can be easily proved, see also \cite[p.~5 (1.1)]{MR1474726} and \cite[Lemma~V.20]{PleisDiss20}.
\begin{lem} \label{Lem-Momente-Normalvert}
	Let $Z \sim \Nd(0, \sigma^2)$ be a real-valued Gaussian random variable with
	some finite $\sigma>0$. Then, for $p \in {[1,\infty[}$ it holds
	\begin{equation*}
		\Erw( |Z|^p ) = \frac{ (\sqrt{2} \sigma )^p }{ \sqrt{\pi} } \, 
		\Gamma \bigg( \frac{p+1}{2} \bigg) \, .
	\end{equation*}
\end{lem}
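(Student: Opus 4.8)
The plan is to compute the absolute moment directly from the definition as an integral against the Gaussian density and to reduce it to the integral representation of the gamma function by a single elementary substitution. Since $\sigma>0$ is finite and $p\geq 1$, no integrability or degeneracy issues arise, so the argument is a short explicit calculation.

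First I would write out the density of $Z\sim\Nd(0,\sigma^2)$, namely $f(z)=\frac{1}{\sqrt{2\pi}\,\sigma}\,e^{-z^2/(2\sigma^2)}$, and express the $p$th absolute moment as $\Erw(|Z|^p)=\int_{-\infty}^{\infty}|z|^p\,f(z)\,\mathrm{d}z$. Because the integrand is even in $z$, I would fold the integral onto the positive half-line to obtain $\Erw(|Z|^p)=\frac{2}{\sqrt{2\pi}\,\sigma}\int_0^{\infty}z^p\,e^{-z^2/(2\sigma^2)}\,\mathrm{d}z$, where finiteness is immediate from the Gaussian decay of the integrand.

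The key step is the substitution $t=z^2/(2\sigma^2)$, giving $z=\sqrt{2}\,\sigma\sqrt{t}$ and $\mathrm{d}z=\frac{\sqrt{2}\,\sigma}{2}\,t^{-1/2}\,\mathrm{d}t$, so that $z^p\,\mathrm{d}z=\frac{(\sqrt{2}\,\sigma)^{p+1}}{2}\,t^{(p-1)/2}\,\mathrm{d}t$ while $e^{-z^2/(2\sigma^2)}=e^{-t}$. The remaining integral $\int_0^{\infty}t^{(p-1)/2}e^{-t}\,\mathrm{d}t$ is, by the definition of the gamma function, exactly $\Gamma\!\big(\tfrac{p+1}{2}\big)$, which is well defined since $\tfrac{p+1}{2}>0$ for $p\geq 1$. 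Substituting back yields $\Erw(|Z|^p)=\frac{2}{\sqrt{2\pi}\,\sigma}\cdot\frac{(\sqrt{2}\,\sigma)^{p+1}}{2}\,\Gamma\!\big(\tfrac{p+1}{2}\big)$.

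The only remaining task is constant bookkeeping, which is also the one place a spurious factor could creep in: cancelling a single factor $\sqrt{2}\,\sigma$ between numerator and denominator in $\frac{(\sqrt{2}\,\sigma)^{p+1}}{\sqrt{2\pi}\,\sigma}$ and splitting off $1/\sqrt{\pi}$ reduces the prefactor to $\frac{(\sqrt{2}\,\sigma)^{p}}{\sqrt{\pi}}$, which gives precisely the claimed identity $\Erw(|Z|^p)=\frac{(\sqrt{2}\,\sigma)^{p}}{\sqrt{\pi}}\,\Gamma\!\big(\tfrac{p+1}{2}\big)$. I do not expect any genuine difficulty here, as this is the standard centered-Gaussian absolute-moment formula and the substitution collapses it to the gamma integral in essentially one line; the main care needed is simply verifying that the powers of $\sqrt{2}\,\sigma$ and the factors of $2$ and $\pi$ combine correctly.
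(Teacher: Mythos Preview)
Your proof is correct and follows essentially the same approach as the paper: write out the Gaussian density, use symmetry to restrict to $[0,\infty)$, apply the substitution $t=z^2/(2\sigma^2)$, and identify the resulting integral as $\Gamma\!\big(\tfrac{p+1}{2}\big)$. The paper's version differs only cosmetically (it names the substitution variable $x$ and adds a superfluous case distinction $p=1$ versus $p>1$ at the end), so there is nothing further to add.
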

\begin{proof}
	Let $p \geq 1$ and $\sigma>0$ be arbitrarily fixed. 
	Substituting $x = \frac{z^2}{2 \sigma^2}$, it follows that
	\begin{align*}
		\Erw (|Z|^p) &= \frac{1}{\sqrt{2 \pi} \sigma} \int_{-\infty}^{\infty} |z|^p 
		e^{-\frac{z^2}{2 \sigma^2}} \, \mathrm{d}z
		= \frac{2}{\sqrt{2 \pi} \sigma} \int_{0}^{\infty} z^p 
		e^{-\frac{z^2}{2 \sigma^2}} \, \mathrm{d}z 
		= \frac{(2 \sigma^2)^{\frac{p+1}{2}}}{\sqrt{2 \pi} \sigma} 
		\int_0^{\infty} x^{\frac{p-1}{2}} \, e^{-x} \, \mathrm{d}x \, .
	\end{align*}
	For $p=1$ the assertion follows directly because $\Gamma(1)=1$.
	In case of $p>1$, the assertion follows due to $\Gamma (s)
	= \int_0^{\infty} t^{s-1} \, e^{-t} \, \mathrm{d}t$ for $s >0$.
\end{proof}
\noindent
Now, we are prepared for the proof of the error estimate in $L^p(\Omega)$-norm
stated in Theorem~\ref{Theo-Lp-Error-Estimates}.
\begin{proof}[Proof of Theorem~\ref{Theo-Lp-Error-Estimates}]
    Let $p >2$ and $\Se \in \mathbb{N}$ arbitrarily fixed. Then, we get due to
    Proposition~\ref{Prop-Iij-FS-convergence-Lp} that
    \begin{align*}
    	\max_{1 \leq i,j \leq m} \Erw \big( | I_{(i,j)}(h) - I_{(i,j)}^{(\Se)}(h) |^p \big)
    	&= \max_{1 \leq r \leq M} \Erw \big( \big| \big( 
    	\hat{A}(h) - \big(\hat{A}^{(\Se)}(h)
    	+ \hat{A}^{1,(\Se)}(h) + \hat{A}^{2,(\Se)}(h) \big) \big)_r \big|^p \big) \\ 
    	&= \max_{1 \leq r \leq M} \Erw \big( \big| \big( 
    	\hat{R}^{2,(\Se)}(h) - \hat{A}^{2,(\Se)}(h) \big)_r \big|^p \big) \\
    	&= \max_{1 \leq r \leq M} \Erw \big( \big| \big(
    	({\Sigma^{2,(\Se)}(X^{(\Se)})})^{1/2} \Psi^{2,(\Se)}
    	- ({\Sigma^{2,\infty}})^{1/2} \Psi^{2,(n)} 
    	\big)_r \big|^p \big) 
		\, .
    \end{align*}
    Let $r \in \{1, \ldots, M\}$ arbitrarily fixed. Given $X^{(\Se)}$, the real-valued
    random variable 
    \begin{align*}
    	\varphi_r \big( X^{(\Se)}, \Psi^{2,(\Se)} \big) 
    	&= \big( ({\Sigma^{2,(\Se)}(X^{(\Se)})})^{1/2} \Psi^{2,(\Se)}
    	- ({\Sigma^{2,\infty}})^{1/2} \Psi^{2,(n)} \big)_r \\
    	&= \sum_{s=1}^M \big(
    	({\Sigma^{2,(\Se)}(X^{(n)})})^{1/2} - ({\Sigma^{2,\infty}})^{1/2} \big)_{r,s} 
    	\Psi^{2,(\Se)}_s
    \end{align*}
    is conditionally Gaussian distributed with conditional expectation $0$ and
    conditional variance 
    \begin{align*}
    	\sigma_{\varphi_r}^2 (X^{(\Se)}) = \sum_{s=1}^M \big(
    	({\Sigma^{2,(\Se)}(X^{(\Se)})})^{1/2} - ({\Sigma^{2,\infty}})^{1/2} \big)_{r,s}^2 \, .
    \end{align*}
    First of all, we prove that $\big| \varphi_r \big( X^{(\Se)}, \Psi^{2,(\Se)} \big)
    \big|^p \in L^1(\Omega)$.
    Let $C(r) \in \mathbb{R}^{1 \times M}$ with $C_{1,r}(r)=1$ and $C_{1,l}(r)=0$ 
    for $l \neq r$.
	Then, making use of Lemma~\ref{Lem-Sqrt-A-B-estimate} and taking into
	account that the minimal eigenvalue of $\Sigma^{2,\infty}$ is 
	$\lambda_{\min}^{2,\infty} = \frac{h^2}{2 \pi^2} \sum_{k=\Se+1}^{\infty}
	\frac{1}{k^2}$, we obtain
	\begin{align*}
	    \Erw \big( \big| \varphi_r \big( X^{(\Se)}, \Psi^{2,(\Se)} \big) \big|^p \big) 
		&= \Erw \big( \big\| C(r) \, \big( ({\Sigma^{2,(\Se)}(X^{(\Se)})})^{1/2} 
		- ({\Sigma^{2,\infty}})^{1/2} \big) \, \Psi^{2,(\Se)} \big\|_2^{p} \big) \\
		&\leq \Erw \big( \big\| C(r) \, \big( ({\Sigma^{2,(\Se)}(X^{(\Se)})})^{1/2} 
		- ({\Sigma^{2,\infty}})^{1/2} \big) \big\|_F^p \big) \, 
		\Erw \big( \big\| \Psi^{2,(\Se)} \big\|_2^{p} \big) \\
		&\leq \big( \lambda_{\min}^{2,\infty} \big)^{-p/2} \,
		\Erw \big( \big\| C(r) \, \big( \Sigma^{2,(\Se)}(X^{(\Se)})
		- \Sigma^{2,\infty} \big) \big\|_F^{p} \big) \, 
		\Erw \big( \big\| \Psi^{2,(\Se)} \big\|_2^{p} \big) \\
		&= \big( \lambda_{\min}^{2,\infty} \big)^{-p/2} \,
		\Erw \bigg( \bigg| \sum_{s=1}^M \big( \Sigma^{2,(\Se)}(X^{(\Se)})
		- \Sigma^{2,\infty} \big)_{r,s}^2 \bigg|^{p/2} \bigg) \, 
		\Erw \big( \big\| \Psi^{2,(\Se)} \big\|_2^{p} \big)
	\end{align*}
	with $\Erw ( \| \Psi^{2,(\Se)} \|_2^{p} ) 
	\leq M^{p/2-1} \sum_{s=1}^M \Erw ( | \Psi^{2,(\Se)}_s |^p ) 
	< \infty$ due to Lemma~\ref{Lem-Momente-Normalvert}. 
	Moreover, with the triangle inequality it holds 
	\begin{align*}
		\Erw \bigg( \bigg| \sum_{s=1}^M \big( \Sigma^{2,(\Se)}(X^{(\Se)})
		- \Sigma^{2,\infty} \big)_{r,s}^2 \bigg|^{p/2} \bigg)
		&= \bigg\| \sum_{s=1}^M \big( \Sigma^{2,(\Se)}(X^{(\Se)})
		- \Sigma^{2,\infty} \big)_{r,s}^2 \bigg\|_{L^{p/2}(\Omega)}^{p/2} \\
		&\leq \bigg( \sum_{s=1}^M \big\| \big( \Sigma^{2,(\Se)}(X^{(\Se)})
		- \Sigma^{2,\infty} \big)_{r,s}^2 \big\|_{L^{p/2}(\Omega)} \bigg)^{p/2} \\
		&= \bigg( \sum_{s=1}^M \big\| \big( \Sigma^{2,(\Se)}(X^{(\Se)})
		- \Sigma^{2,\infty} \big)_{r,s} \big\|_{L^{p}(\Omega)}^2 \bigg)^{p/2} \, .
	\end{align*}
    Making use of the structure of the matrix $\Sigma^{2,(\Se)}(X^{(\Se)})$
    detailed in Appendix~\ref{Appendix-A} and applying
    Lemma~\ref{Lem-Momente-Normalvert}, we get
    \begin{equation} \label{Proof-Lp-Estimate-ZN-upb}
    	\begin{split}
    	\big\| \big( \Sigma^{2,(\Se)}(X^{(\Se)}) - \Sigma^{2,\infty} \big)_{r,s}
    	\big\|_{L^p(\Omega)}
    	&\leq \frac{h^2}{4 \pi^2} \sum_{k=\Se+1}^{\infty} \frac{1}{k^2}
    	\, \big\| \big(H_m \Sigma(X_1) H_m^{\Tt} - 2 \, I_M \big)_{r,s} 
    	\big\|_{L^p(\Omega)} \\
    	&\leq \frac{h^2}{4 \pi^2} \, \frac{\pi^2}{6} \max \big( 
    	\big\| X_{1,1}^2 + X_{2,1}^2 -2 \big\|_{L^p(\Omega)}, 
    	\big\| X_{1,1} \, X_{2,1}	\big\|_{L^p(\Omega)} \big) \\
    	&\leq \frac{h^2}{24} \max \bigg( 2  \bigg( \frac{\sqrt{2}^p}{\sqrt{\pi}} 
    	\Gamma \big( \tfrac{p+1}{2} \big) \bigg)^{1/p} + 2, \bigg( \frac{\sqrt{2}^p}{\sqrt{\pi}} 
    	\Gamma \big( \tfrac{p+1}{2} \big) \bigg)^{2/p} \bigg) .
    	\end{split}
    \end{equation}
    Thus, $\Erw (| \varphi_r \big( X^{(\Se)}, \Psi^{2,(\Se)} \big) |^p )$ is uniformly
    bounded for all $1 \leq r \leq M$ and therefore it holds
    $| \varphi_r \big( X^{(\Se)}, \Psi^{2,(\Se)} \big) |^p \in L^1(\Omega)$ for all
    $1 \leq r \leq M$.
	\\ \\
    Since $X^{(\Se)}$ and $\Psi^{2,(\Se)}$ are stochastically independent and
    because $|\varphi_r \big( X^{(\Se)}, \Psi^{2,(\Se)} \big) \big|^p \in
    L^1(\Omega)$, it follows by the substitution property of the conditional
    expectation \cite[Theorem~2.10]{MR3618289} that 
    \begin{align*}
    	\Erw \big( \big|
    	\varphi_r \big( X^{(\Se)}, \Psi^{2,(\Se)} \big) \big|^p \bigMid X^{(\Se)} \big) 
    	= h \big( X^{(\Se)} \big)
    \end{align*}
    with $h(x) = \Erw \big( \big| \varphi_r \big( x, \Psi^{2,(\Se)} \big) \big|^p \big)$.
    Since $\varphi_r \big( x, \Psi^{2,(\Se)} \big)$
    has a Gaussian distribution with expectation $0$ and variance
    $\sigma_{\varphi_r}^2(x)$ it follows with Lemma~\ref{Lem-Momente-Normalvert}
    that $h(x) = \frac{( \sqrt{2} \sigma_{\varphi_r}(x) )^p}{\sqrt{\pi}} 
    \Gamma \big( \frac{p+1}{2} \big)$. Thus, it holds
	\begin{align*}
		&\Erw \big( \big| \big(
		({\Sigma^{2,(\Se)}(X^{(\Se)})})^{1/2} \Psi^{2,(\Se)}
		- ({\Sigma^{2,\infty}})^{1/2} \Psi^{2,(\Se)} \big)_r \big|^p \big) \\
		&= \Erw \bigg( \Erw \bigg(
		\bigg| \sum_{s=1}^M \big( ({\Sigma^{2,(\Se)}(X^{(\Se)})})^{1/2}
		- ({\Sigma^{2,\infty}})^{1/2} \big)_{r,s}  \Psi^{2,(\Se)}_s \bigg|^p 
		\biggMid X^{(\Se)} \bigg) \bigg) \\
		&= \frac{ 2^{p/2} }{ \sqrt{\pi} } \, \Gamma \bigg( \frac{p+1}{2} \bigg) \,
		\Erw \bigg( \bigg| \sum_{s=1}^M \big( ({\Sigma^{2,(\Se)}(X^{(\Se)})})^{1/2}
		- ({\Sigma^{2,\infty}})^{1/2} \big)_{r,s}^2 \bigg|^{p/2} \bigg) \\
		&= \frac{ 2^{p/2} }{ \sqrt{\pi} } \, \Gamma \bigg( \frac{p+1}{2} \bigg) \,
		\Erw \big( \big\| C(r) \, \big( ({\Sigma^{2,(\Se)}(X^{(\Se)})})^{1/2}
		- ({\Sigma^{2,\infty}})^{1/2} \big) \big\|_F^{p} \big) \, .
	\end{align*}
	Next, we make use of Lemma~\ref{Lem-Sqrt-A-B-estimate} taking into account 
	that the minimal eigenvalue of $\Sigma^{2,\infty}$ is 
	$\lambda_{\min}^{2,\infty} = \frac{h^2}{2 \pi^2} \sum_{k=\Se+1}^{\infty}
	\frac{1}{k^2}$. Then, we obtain
	\begin{align*}
		&\frac{ 2^{p/2} }{ \sqrt{\pi} } \, \Gamma \bigg( \frac{p+1}{2} \bigg) \,
		\Erw \big( \big\| C(r) \, \big( ({\Sigma^{2,(\Se)}(X^{(\Se)})})^{1/2}
		- ({\Sigma^{2,\infty}})^{1/2} \big) \big\|_F^{p} \big) \\
		&\leq \frac{ 2^{p/2} }{ \sqrt{\pi} } \, \Gamma \bigg( \frac{p+1}{2} \bigg) \,
		\big( \lambda_{\min}^{2,\infty} \big)^{-p/2} \,
		\Erw \big( \big\| C(r) \, \big( \Sigma^{2,(\Se)}(X^{(\Se)})
		- \Sigma^{2,\infty} \big) \big\|_F^{p} \big) \\
		&= \frac{ 2^{p/2} }{ \sqrt{\pi} } \, \Gamma \bigg( \frac{p+1}{2} \bigg) \,
		\big( \lambda_{\min}^{2,\infty} \big)^{-p/2} \,
		\Erw \bigg( \bigg| \sum_{s=1}^M \big( \Sigma^{2,(\Se)}(X^{(\Se)})
		- \Sigma^{2,\infty} \big)_{r,s}^2 \bigg|^{p/2} \bigg) \\
		&\leq \frac{ 2^{p/2} }{ \sqrt{\pi} } \, \Gamma \bigg( \frac{p+1}{2} \bigg) \,
		\big( \lambda_{\min}^{2,\infty} \big)^{-p/2} \,
		\bigg( \sum_{s=1}^M \bigg( \Erw \bigg( \bigg| \big( \Sigma^{2,(\Se)}(X^{(\Se)})
		- \Sigma^{2,\infty} \big)_{r,s}^2 \bigg|^{p/2} \bigg) \bigg)^{2/p} \bigg)^{p/2} \\
		&= \frac{ 2^{p/2} }{ \sqrt{\pi} } \, \Gamma \bigg( \frac{p+1}{2} \bigg) \,
		\big( \lambda_{\min}^{2,\infty} \big)^{-p/2} \,
		\bigg( \frac{h^2}{4 \pi^2} \bigg)^p \, \bigg( \sum_{s=1}^M
		\bigg\| \sum_{k=\Se+1}^{\infty}
		\frac{1}{k^2} \big(H_m \Sigma(X_k) H_m^{\Tt} - 2 \, I_M \big)_{r,s}
		\bigg\|_{L^p(\Omega)}^{2} \bigg)^{p/2}
	\end{align*}
	by the triangle inequality. 
	Due to estimate \eqref{Proof-Lp-Estimate-ZN-upb} it follows that
	$(Z_N^{(r,s)})_{N \geq \Se+1}$ with $Z_N^{(r,s)} = \sum_{k=\Se+1}^N
	\frac{1}{k^2} \big(H_m \Sigma(X_k) H_m^{\Tt} - 2 \, I_M \big)_{r,s}$ is a
	martingale 
	that is uniformly bounded in $L^p(\Omega)$ for all $N \geq \Se+1$. 
	Thus, a Burkholder type inequality as in \cite[Prop.~2.5]{MR1331198} or
	\cite[Prop.~2.1]{PlRoe20arXiv2006.16048v1} can be applied to obtain
	\begin{align*}
		&\bigg( \sum_{s=1}^M \bigg\| \sum_{k=\Se+1}^{\infty}
		\frac{1}{k^2} \big(H_m \Sigma(X_k) H_m^{\Tt} - 2 \, I_M \big)_{r,s}
		\bigg\|_{L^p(\Omega)}^{2} \bigg)^{p/2} \\
		&= \bigg( \sum_{s=1}^M \lim_{N \to \infty}
		\bigg\| \sum_{k=\Se+1}^{N}
		\frac{1}{k^2} \big(H_m \Sigma(X_k) H_m^{\Tt} - 2 \, I_M \big)_{r,s}
		\bigg\|_{L^p(\Omega)}^{2} \bigg)^{p/2} \\
		&\leq \bigg( \sum_{s=1}^M \lim_{N \to \infty} (p-1) \sum_{k=\Se+1}^N 
		\bigg\| \frac{1}{k^2} \big(H_m \Sigma(X_k) H_m^{\Tt} - 2 \, I_M \big)_{r,s} 
		\bigg\|_{L^p(\Omega)}^2 \bigg)^{p/2} \\
		&= \bigg( (p-1) \sum_{k=\Se+1}^{\infty}
		\frac{1}{k^4} \bigg)^{p/2} \, \bigg( \sum_{s=1}^M 
		\big\| \big(H_m \Sigma(X_1) H_m^{\Tt} - 2 \, I_M \big)_{r,s} 
		\big\|_{L^p(\Omega)}^2 \bigg)^{p/2} \\
		&= \bigg( (p-1) \sum_{k=\Se+1}^{\infty}
		\frac{1}{k^4} \bigg)^{p/2} \, \Big( \| X_{1,1}^2 + X_{2,1}^2 - 2 \|_{L^p(\Omega)}^2
		+ (2m-4) \| X_{1,1} X_{2,1} \|_{L^p(\Omega)}^2 \Big)^{p/2} \, .
	\end{align*}
	Now, Lemma~\ref{Lem-Momente-Chi2} and
	Lemma~\ref{Lem-Momente-Normalvert} are applied. Further, due to 
	$e^t \leq e^1$ for all $t \in [0,1]$, it follows that
	\begin{align*}
		&\Big( \| X_{1,1}^2 + X_{2,1}^2 - 2 \|_{L^p(\Omega)}^2 + (2m-4) 
		\| X_{1,1} X_{2,1} \|_{L^p(\Omega)}^2 \Big)^{p/2} \\
		&= \bigg( \frac{4}{e^{2/p}} \bigg( \Gamma(p+1) + \int_0^1 t^p \, e^t \,
		\mathrm{d}t \bigg)^{2/p}
		+ (2m-4) \bigg( \frac{ (\sqrt{2})^p }{ \sqrt{\pi} } \, 
		\Gamma \bigg( \frac{p+1}{2} \bigg) \bigg)^{4/p} \bigg)^{p/2} \\
		&\leq \bigg( \frac{4}{e^{2/p}} \bigg( \Gamma(p+1) + \frac{e}{p+1} \bigg)^{2/p}
		+ (2m-4) \, \frac{4}{\pi^{2/p}} \, \bigg( 
		\Gamma \bigg( \frac{p+1}{2} \bigg) \bigg)^{4/p} \bigg)^{p/2} \, .
	\end{align*}
	Thus, (\ref{Theo-Lp-Error-Estimates-i}) is proved. For (\ref{Theo-Lp-Error-Estimates-ii}), observe that 
	\begin{align*}
		\big( \Erw \big( \| I(h) - I^{(\Se)}(h) \|_F^p \big) \big)^{1/p} 
		&= \bigg\| \sum_{i,j=1}^m \big| I_{(i,j)}(h) - I_{(i,j)}^{(\Se)}(h) \big|^2 
		\bigg\|_{L^{p/2}(\Omega)}^{1/2} \\
		&\leq \bigg( \sum_{i,j=1}^m \bigg\| \big| I_{(i,j)}(h) - I_{(i,j)}^{(\Se)}(h) \big|^2 
		\bigg\|_{L^{p/2}(\Omega)} \bigg)^{1/2} \\
		&\leq (m^2-m)^{1/2} \, \max_{1 \leq i,j \leq m} \big\| I_{(i,j)}(h) - I_{(i,j)}^{(\Se)}(h)
		\big\|_{L^p(\Omega)}
	\end{align*}
	because $I_{(i,i)}(h) - I_{(i,i)}^{(\Se)}(h) = 0$ $\Prob$-a.s.\ for all $i=1, \ldots, m$.
\end{proof}
\noindent
From Theorem~\ref{Theo-Lp-Error-Estimates} the following corollary directly
follows by estimating the series on the right hand side in the same way as 
in the proof of Corollary~\ref{Cor-I-In-Fehler-Estimate}.
\begin{cor} \label{Corollary-Lp-Error-Estimates}
	Let $p > 2$ and $\Se \in \mathbb{N}$. Then, for the approximation
	${I}_{(i,j)}^{(\Se)}(h)$  defined in \eqref{Approximation-vec-Version-hat-Iij} 
	of the iterated stochastic integral $I_{(i,j)}(h)$ for $1 \leq i, j \leq m$ it holds
	\begin{enumerate}[(i)]
		\item \label{Corollary-Lp-Error-Estimates-i}
		\begin{equation} \label{Cor-Lp-Fehler-Estimates-eqn1}
		\max_{1 \leq i,j \leq m} \big( \Erw \big( | I_{(i,j)}(h) - I_{(i,j)}^{(\Se)}(h) |^p 
		\big) \big)^{1/p}
		\leq c_{m,p} \frac{\sqrt{p-1} h}{\sqrt{3} \pi^{\frac{2p+1}{2p}} \Se} \, , 
		\end{equation}
		\item \label{Corollary-Lp-Error-Estimates-ii}
		\begin{equation} 
		\big( \Erw \big( \| I(h) - I^{(\Se)}(h) \|_F^p \big) \big)^{1/p}
		\leq c_{m,p} \frac{\sqrt{(p-1) (m^2-m)} h}{\sqrt{3} \pi^{\frac{2p+1}{2p}} 
		\Se} \, ,
		\end{equation}
	\end{enumerate}
	where the constant $c_{m,p}$ is the same as the one given in 
	Theorem~\ref{Theo-Lp-Error-Estimates}.
\end{cor}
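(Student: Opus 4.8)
The plan is to deduce the corollary from Theorem~\ref{Theo-Lp-Error-Estimates} with essentially no extra probabilistic work: the only task is to replace the factor $\bigl(\tfrac{\sum_{k=\Se+1}^{\infty} k^{-4}}{\sum_{k=\Se+1}^{\infty} k^{-2}}\bigr)^{1/2}$ occurring in both parts of that theorem by the explicit bound $\tfrac{1}{\sqrt{3}\,\Se}$, exactly as was done in the proof of Corollary~\ref{Cor-I-In-Fehler-Estimate}. Thus everything comes down to an elementary estimate for a quotient of two convergent tail series.

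First I would bound the numerator and denominator separately by integral comparison on suitably shifted intervals. Since $x \mapsto x^{-4}$ is positive and convex on $[\Se+\tfrac12,\infty)$, the midpoint rule yields $k^{-4}\le\int_{k-1/2}^{k+1/2} x^{-4}\,\mathrm{d}x$ for every integer $k\ge\Se+1$, hence $\sum_{k=\Se+1}^{\infty} k^{-4}\le\int_{\Se+1/2}^{\infty} x^{-4}\,\mathrm{d}x=\tfrac13(\Se+\tfrac12)^{-3}$. For the denominator I would use $k^{-2}\ge\int_{k-1/4}^{k+3/4} x^{-2}\,\mathrm{d}x$ (which reduces to $(k-\tfrac14)(k+\tfrac34)\ge k^{2}$, valid for $k\ge1$), so that $\sum_{k=\Se+1}^{\infty} k^{-2}\ge\int_{\Se+3/4}^{\infty} x^{-2}\,\mathrm{d}x=(\Se+\tfrac34)^{-1}$; see also \cite{MR1843055}. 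Combining the two estimates gives
\begin{align*}
	\frac{\sum_{k=\Se+1}^{\infty} k^{-4}}{\sum_{k=\Se+1}^{\infty} k^{-2}}
	\;\le\; \frac{\Se+\tfrac34}{3\,(\Se+\tfrac12)^{3}}
	\;\le\; \frac{1}{3\,\Se^{2}} \,,
\end{align*}
where the last inequality is equivalent to $\Se^{2}(\Se+\tfrac34)\le(\Se+\tfrac12)^{3}$, i.e.\ to $0\le\tfrac34\Se^{2}+\tfrac34\Se+\tfrac18$, which holds for all $\Se\in\mathbb{N}$.

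Taking square roots gives $\bigl(\tfrac{\sum_{k=\Se+1}^{\infty} k^{-4}}{\sum_{k=\Se+1}^{\infty} k^{-2}}\bigr)^{1/2}\le\tfrac{1}{\sqrt{3}\,\Se}$, and substituting this into Theorem~\ref{Theo-Lp-Error-Estimates}~(\ref{Theo-Lp-Error-Estimates-i}) and (\ref{Theo-Lp-Error-Estimates-ii}) yields \eqref{Cor-Lp-Fehler-Estimates-eqn1} and part~(\ref{Corollary-Lp-Error-Estimates-ii}) at once, with the same constant $c_{m,p}$. I do not anticipate any real difficulty here; the only mildly delicate point is choosing the shifted integration endpoints $\Se+\tfrac12$ and $\Se+\tfrac34$ so that the resulting rational bound collapses to a manifestly nonnegative quadratic in $\Se$ --- and even that is already recorded in the proof of Corollary~\ref{Cor-I-In-Fehler-Estimate}.
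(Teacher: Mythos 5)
Your proposal is correct and follows exactly the paper's route: the corollary is obtained from Theorem~\ref{Theo-Lp-Error-Estimates} by bounding the quotient of tail series by $\frac{1}{3\Se^2}$ via the same integral comparison (with endpoints $\Se+\tfrac12$ and $\Se+\tfrac34$) already used in the proof of Corollary~\ref{Cor-I-In-Fehler-Estimate}. You in fact supply more detail than the paper, which simply refers back to that earlier estimate.
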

\section{Simulation of iterated stochastic integrals}
\label{Sec:Simu-Alg}
For many applications like, e.g., stochastic models described by SDEs or SPDEs, 
one needs to simulate realizations of the approximate solutions. Here it has 
to be pointed out that, in general, the simulation problem is different from the
approximation problem. For the approximation problem considered in
Section~\ref{Sec:Alg} we make use of information about the realization 
of the driving Brownian motion like the values of the increments of the 
Brownian motion, of the Fourier coefficients and of
$\Psi^{1,(\Se)}$ in \eqref{RV-Psi-1-n-Exact} as well as 
of $\Psi^{2,(\Se)}$ in \eqref{RV-Psi-2-n-Exact}. This information has to be
provided or needs to be known and is a priori fixed for the
approximation problem. On the other hand, for the
simulation problem we are free to generate or choose a realization of the 
driving Brownian motion to be considered, i.e., to generate the whole necessary
information ourselves.
For the simulation problem, one only has to take care to sample from the 
correct distribution when this information is generated. 
\\ \\
Since the distribution of the iterated stochastic integrals is, in general, not 
explicitly known, it is necessary to apply an approximation algorithm.
Depending on the considered error criterion, the realizations generated by 
the approximation algorithm need to be close enough to corresponding 
realizations that may come into existence based on the exact distribution. 
This feature is guaranteed if the algorithm under consideration also
applies to the approximation problem, which is the case for algorithm
\eqref{Approximation-vec-Version-hat-Iij} proposed in Section~\ref{Sec:Alg}
as convergence in $L^p(\Omega)$-norm is proved. Therefore, we can now
describe how to apply this algorithm for simulating approximations for 
iterated stochastic integrals with some prescribed precision.
%
%
\subsection{Simulation algorithm}
\label{Sec:Simu-Alg-SubSec:SimAlg}
Assume that we want to simulate the twofold iterated stochastic integrals 
$I_{(i,j)}(h)$ together with the increments $\Delta W^i(h)$ for $1 \leq i,j \leq m$
and some $h>0$ such that $L^p$-accuracy of at least $\varepsilon>0$ is 
guaranteed, i.e., such that
\begin{align} \label{Sec:Simu-Error-bound}
	\max_{1 \leq i,j \leq m} \big( \Erw \big( | I_{(i,j)}(h) - I_{(i,j)}^{(\Se)}(h) |^p 
	\big) \big)^{1/p} \leq \varepsilon \, .
\end{align}
Therefore, we have to determine $\Se \in \mathbb{N}$ as small as possible 
under the condition that \eqref{Sec:Simu-Error-bound} is fulfilled. Let 
$\hat{c}_{m,p} = \frac{\sqrt{m}}{\sqrt{12} \, \pi}$ for $p=2$ due to
\eqref{Cor-I-In-Fehler-Estimate-eqn1} and
$\hat{c}_{m,p} = c_{m,p} \frac{\sqrt{p-1}}{\sqrt{3} \pi^{\frac{2p+1}{2p}}}$ if
$p>2$ due to \eqref{Cor-Lp-Fehler-Estimates-eqn1}. Then, it follows that
\begin{align*}
	\Se \geq \hat{c}_{m,p} \frac{h}{\varepsilon}
\end{align*}
has to be fulfilled and we choose $\Se = \lceil \hat{c}_{m,p} 
\frac{h}{\varepsilon} \rceil$. 
\\ \\
The algorithm for the simulation of $I_{(i,j)}(h)$
and $\Delta W^i(h)$ for $i,j \in \{1, \ldots, m\}$ is defined as follows:
%
%
\phantomsection \label{Sec:Algo1}\noindent
Let 
$h>0$ and $\Se \in \mathbb{N}$ be given.
\begin{sffamily}
	\begin{enumerate}
		\item  Simulate $V \sim \Nd(0_m,I_m)$ and let 
		\begin{equation*}
			\Delta W^i(h) = \sqrt{h} \, V_i
		\end{equation*}
		for $i = 1, \ldots, m$.
		\item For $k=1, \ldots, \Se$ simulate $X_{k}, Y_{k} \sim \Nd(0_m,I_m)$ and 
		calculate
		\begin{equation*}
			A_{(i,j)}^{(\Se)}(h) = 
			\frac{h}{2 \pi} \sum_{k=1}^{\Se} \frac{1}{k} \bigg( 
			X_{i,k} \bigg( Y_{j,k} - \frac{\sqrt{2}}{\sqrt{h}} \Delta W^j(h) \bigg) 
			- X_{j,k} \bigg( Y_{i,k} - \frac{\sqrt{2}}{\sqrt{h}} \Delta W^i(h) \bigg)
			\bigg)
		\end{equation*}
		for $1 \leq i < j \leq m$.
		\item Simulate $\Psi^{1,(\Se)} \sim \Nd(0_m,I_m)$ and compute
		\begin{equation*}
			A_{(i,j)}^{1,(\Se)}(h) = \frac{\sqrt{h}}{\sqrt{2} \pi} 
			\bigg( \frac{\pi^2}{6} - \sum_{k=1}^{\Se} \frac{1}{k^2} \bigg)^{1/2} 
			\big( \Delta W^i(h) \, \Psi_j^{1,(\Se)} - \Delta W^j(h) \, \Psi_i^{1,(\Se)} \big)
		\end{equation*}
		for $1 \leq i < j \leq m$.
		\item Let $M= \frac{m(m-1)}{2}$. Simulate $\Psi^{2,(\Se)} \sim \Nd(0_M,I_M)$ 
		and compute 
		\begin{equation*}
			A_{(i,j)}^{2,(\Se)}(h) = \frac{h}{\sqrt{2} \pi} 
			\bigg( \frac{\pi^2}{6} - \sum_{k=1}^{\Se} \frac{1}{k^2} \bigg)^{1/2} 
			\Psi_{r}^{2,(\Se)}
		\end{equation*}
		for $1 \leq i < j \leq m$ with $r = (i-1)m + j - \sum_{k=1}^i k$.
		\item Compute the approximation ${I}_{(i,j)}^{(\Se)}(h)$ of $I_{(i,j)}(h)$ 
		as
		\begin{equation*}
			{I}_{(i,j)}^{(\Se)}(h) = \frac{1}{2} \Delta W^i(h) \, \Delta W^j(h)
			+ A_{(i,j)}^{(\Se)}(h) +  A_{(i,j)}^{1,(\Se)}(h) + A_{(i,j)}^{2,(\Se)}(h)
		\end{equation*}
		and let 
		\begin{equation*}
			{I}_{(j,i)}^{(\Se)}(h) = \frac{1}{2} \Delta W^i(h) \, \Delta W^j(h)
		- A_{(i,j)}^{(\Se)}(h) -  A_{(i,j)}^{1,(\Se)}(h) - A_{(i,j)}^{2,(\Se)}(h)
		\end{equation*}		
		for $1 \leq i < j \leq m$ and further let ${I}_{(i,i)}^{(\Se)}(h) = \frac{1}{2} 
		\big( (\Delta W^i(h))^2 - h \big)$ for $i=1, \ldots, m$.
	\end{enumerate}
\end{sffamily}
%
%
%
Note that for the simulation of $I_{(i,j)}^{(\Se)}(h)$ and $\Delta W^i(h)$ for $i,j \in 
\{1, \ldots, m\}$ one has to simulate $2m (\Se+1) + \frac{m(m-1)}{2}$ 
realizations of independent and identically $\Nd(0,1)$ distributed random
variables. Moreover, it has to be pointed out that in contrast to the 
algorithm proposed by Wiktorsson~\cite{MR1843055} the presented algorithm
permits to simulate the iterated stochastic integrals $I_{(i,j)}^{(\Se)}(h)$ 
sequentially and even to increase the dimension $m$ during the simulation 
without recomputing the already simulated iterated stochastic integral. This
feature is due to the covariance matrix $\Sigma^{2, \infty}$ which is a 
diagonal matrix. As a result of this, the approximations for the truncation
terms $A_{(i,j)}^{2,(\Se)}(h)$ are stochastically independent for $i<j$.
\subsection{Computational cost}
\label{Sec:Simu-Alg-SubSec:CompCost}
Now, we take a closer look at the computational cost that we measure as
the number of independent realizations of standard Gaussian random variables
needed for one realization of $\Delta W(h)$ together with 
the matrix $I^{(\Se)}(h)$ for some $h>0$ in case of an $m$-dimensional 
Brownian motion. Given some prescribed error bound $\varepsilon$ for 
the $L^p(\Omega)$-error \eqref{Sec:Simu-Error-bound}, the 
improved algorithm proposed in 
Section~\ref{Sec:Simu-Alg-SubSec:SimAlg}, which is denoted as $\IA$, 
has computational cost
\begin{align*}
	\costs_{\IA}(\varepsilon) &= 2m \Big( \Big\lceil \hat{c}_{m,p} \frac{h}{\varepsilon} 
	\Big\rceil +1 \Big) + \frac{m (m-1)}{2}
\end{align*}
for the simulation of one realization of the increments $\Delta W^i(h)$ together
with the iterated stochastic integrals $I_{(i,j)}^{(\Se)}(h)$ for all $i,j =1, \ldots,m$. 
Especially, note that $\hat{c}_{m,2} = \frac{\sqrt{m}}{\sqrt{12} \, \pi}$ if $p=2$.
For a comparison, we consider the cost of the algorithm proposed by
Wiktorsson~\cite{MR1843055} denoted as $\WIK$, for which upper
$L^2(\Omega)$-error bounds are known. Therefore, choosing $\Se$ as in
\cite[(4.9)]{MR1843055} for this algorithm it holds that
\begin{align*}
	\costs_{\WIK}(\varepsilon) &= 2m \bigg( \Big\lceil \frac{\sqrt{5 (m-1)}
	 m}{\sqrt{24} \pi} \frac{h}{\varepsilon} \Big\rceil + \frac{1}{2} \bigg) 
 	+ \frac{m (m-1)}{2}
\end{align*}
if the error bound in \eqref{Sec:Simu-Error-bound} has to be fulfilled for $p=2$.
Finally, we consider algorithm \eqref{Iij-Truncated-Fourier-Series-Alg} denoted 
as $\KPW$ that is proposed in
\cite{MR1178485,MR1214374,Mil95}. For $p \geq 2$ it follows with
Proposition~\ref{Prop-Iij-FS-convergence-Lp} 
that, in order to fulfill the error bound
\eqref{Sec:Simu-Error-bound}, one has to choose $\Se \geq \big\lceil 
\frac{(p-1)^2}{2 \pi^2} \big( \Gamma \big( \frac{p}{2}+1 \big) \big)^{2/p}
\frac{h^2}{\varepsilon^2} \big\rceil$. Choosing $\Se = 
\big\lceil \frac{(p-1)^2}{2 \pi^2} \big( \Gamma \big( \frac{p}{2}+1 \big) \big)^{2/p}
\frac{h^2}{\varepsilon^2} \big\rceil$ results in the
computational cost
\begin{align*}
	\costs_{\KPW}(\varepsilon) &= 2m \bigg( 
	\Big\lceil \frac{(p-1)^2}{2 \pi^2} \Big( \Gamma \Big( \frac{p}{2}+1 \Big) \Big)^{2/p}
	\frac{h^2}{\varepsilon^2} \Big\rceil + 1 \bigg) \, .
\end{align*}
Comparing the computational costs of the algorithms $\IA$, $\WIK$ and $\KPW$ 
as $\varepsilon \to 0$, it follows that the costs of $\IA$ and $\WIK$ are of
order $\Oo( h \varepsilon^{-1} )$ while the cost of $\KPW$ is of order 
$\Oo(h^2 \varepsilon^{-2} )$ for some fixed $h>0$. 
Thus, algorithms $\IA$ and $\WIK$ possess a higher order of convergence
than algorithm $\KPW$ if their errors versus costs are compared.
Having a closer look at algorithms $\IA$ and $\WIK$,
there is asymptotically a reduction by the factor
$\sqrt{\frac{5}{2} m(m-1)}$ for the cost of algorithm $\IA$ compared to
the cost of $\WIK$ if the error estimates in \cite[Theorem 4.1]{MR1843055} are
applied in the case of $p=2$. It is worth mentioning that in case of the Frobenius 
norm as in
\eqref{Cor-I-In-Fehler-Estimate-eqn2} the computational cost for $\IA$ is
reduced by the factor $\sqrt{5}$ compared to algorithm $\WIK$. 
This reduction of the computational cost for algorithm $\IA$ originates in 
the exact approximation of the Fourier coefficients $a_{i,0}$ represented by 
the truncation term $\hat{R}^{1,(\Se)}$ that is conditionally Gaussian
distributed. Moreover,
it has to be pointed out that algorithm $\IA$ does not need the calculation
of the square root of a $M \times M$ covariance matrix as it is the case for
algorithm $\WIK$, see \cite[(4.7)]{MR1843055} because the covariance 
matrix $\Sigma^{2,\infty}$ in \eqref{Sigma-Infty} for algorithm $\IA$ 
is a multiple of the identity matrix
whereas the covariance matrix for algorithm $\WIK$ even
depends on $\Delta W(h)$ and thus needs to be recalculated for each 
realization. As a result of this, there is a substantial improvement in applying 
algorithm $\IA$ compared to the algorithm  $\WIK$ by 
Wiktorsson~\cite{MR1843055}, on the one hand by a reduction of the 
number of necessary 
realizations of Gaussian random variables and on the other hand by saving 
the calculation of the square root of the covariance matrix.
\\ \\
As an example, if one of the algorithms $\KPW$, $\WIK$ or $\IA$ is applied
together with a numerical scheme 
for the computation of $L^2$-approximations $Y^h$ of solutions $X$ for some 
SDE with 
a root mean square error (RMSE) of order $\Oo(h)$ w.r.t.\ step size $h$ like the 
Milstein scheme, i.e., such that 
\begin{align*}
	\big( \Erw ( \|X_T - Y^h_T \|^2 ) \big)^{1/2} = \Oo(h)
\end{align*}
at some time point $T>0$ as $h \to 0$,
then one has to choose $\Se$ such that the convergence rate
$\Oo(h)$ is preserved if the twofold iterated stochastic integrals in the numerical
scheme are replaced by the approximated ones. This means that the
RMSE for the approximated iterated stochastic integrals 
has to be of order $\Oo(h^{3/2})$, see \cite[Cor.~10.6.5]{MR1214374} or
\cite[Lem.~6.2]{Mil95}. Choosing $\varepsilon = h^{3/2}$ results in
$\Se = \big\lceil \frac{\sqrt{m}}{\sqrt{12} \pi} h^{-1/2} \big\rceil$ and the total
cost for the computation of one realization of $Y^h(T)$ that is based on approximations on $T/h$ time intervals amounts to
$\costs_{\IA}(h) = \Oo(h^{-3/2})$. Analogously, the computational cost for
algorithm $\WIK$ is $\costs_{\WIK}(h) = \Oo(h^{-3/2})$. In contrast to that,
the computational cost for algorithm $\KPW$ is $\costs_{\KPW}(h) =
\Oo(h^{-2})$,
which is of the same order as if one would apply the strong order $1/2$
Euler-Maruyama scheme with step size $h^2$ resulting in $\costs_{\EM}(h^2) 
= \Oo(h^{-2})$ in order to obtain the same RMSE of order $\Oo(h)$. Thus, 
instead of
applying, e.g., the Milstein scheme together with algorithm $\KPW$, one may
use the Euler-Maruyama scheme with step size $h^2$, which is easier
to implement and in general needs less computational effort. However, if the
Milstein scheme is combined with 
the algorithms $\IA$ or $\WIK$, then a higher order of convergence than that 
of the Euler-Maruyama scheme is attained, see also discussions in
\cite{MR2669396,MR1843055}. Especially, if the RMSE versus computational
costs are considered, then the resulting so-called effective order of convergence 
for the Milstein scheme together with algorithm $\IA$ or $\WIK$ is
$p_{\text{eff}}=3/2$,
whereas the effective order of convergence for the Milstein scheme together
with algorithm $\KPW$ is $p_{\text{eff}}=1/2$, which is the same as that for the 
Euler-Maruyama scheme. Thus, the algorithms $\IA$ and $\WIK$ allow to 
improve the order of convergence if they are combined with the Milstein scheme.
Since algorithm $\IA$ needs significantly less 
computational effort than algorithm $\WIK$ in order to guarantee the same
RMSE, the combination of the Milstein scheme with algorithm $\IA$
outperforms the combination of the Milstein scheme with algorithm $\WIK$.
\appendix
\section{Appendix: The covariance matrix}
\label{Appendix-A}
For the proofs of convergence, it is useful to have a closer look at the
matrix $H_m \Sigma(X_k) H_m^{\Tt}$. Observe, that we can factorize this matrix 
by
\begin{align*}
H_m \Sigma(X_k) H_m^{\Tt} &= H_m (P_m - I_{m^2}) (I_m \otimes (X_k X_k^{\Tt}))
(P_m - I_{m^2})^{\Tt} H_m^{\Tt} \\
&= H_m (P_m-I_{m^2}) (I_m \otimes X_k) \big[ H_m (P_m-I_{m^2}) (I_m \otimes 
X_k) \big]^{\Tt} \\
&= H_m ( X_k \otimes I_m - I_m \otimes X_k) \big[ H_m ( X_k \otimes I_m 
- I_m \otimes X_k) \big]^{\Tt} \, .
\end{align*}
Now, we can easily calculate the $m^2 \times m$ matrix
\begin{align*}
	X_k \otimes I_m - I_m \otimes X_k = \begin{pmatrix} 
	0 & & & & & \\
	-X_{2,k} & X_{1,k} & & & & \\
	-X_{3,k} & & X_{1,k} & & & \\
	-X_{4,k} & & & X_{1,k} & & \\
	\vdots & & & & \ddots & \\
	-X_{m,k} & & & & & X_{1,k} \\
	\cdashline{1-6}
	X_{2,k} & -X_{1,k} & & & & \\
	& 0 & & & & \\
	& -X_{3,k} & X_{2,k} & & & \\
	& -X_{4,k} & & X_{2,k} & & \\
	& \vdots & & & \ddots & \\
	& -X_{m,k} & & & & X_{2,k} \\
	\cdashline{1-6}
	\vdots & & & & & \\
	\cdashline{1-6}
	X_{l,k} & & & -X_{1,k} & & \\
	& X_{l,k} & & -X_{2,k} & & \\
	& & \ddots & \vdots & & \\
	& & & 0 & & \\
	& & & \vdots & \ddots & \\
	& & & -X_{m,k} & & X_{l,k} \\
	\cdashline{1-6}
	\vdots & & & & & \\
	\cdashline{1-6}
	X_{m,k} & & & & & -X_{1,k} \\
	& X_{m,k} & & & & -X_{2,k} \\
	& & X_{m,k} & & & -X_{3,k} \\
	& & & \ddots & & \vdots \\
	& & & & X_{m,k} & -X_{m-1,k} \\
	& & & & & 0 
	\end{pmatrix} \, .
\end{align*}
The selection matrix $H_m$ applied to $X_k \otimes I_m - I_m \otimes X_k$ 
picks out the $M$ rows that we obtain by deleting in the first block the first 
row, in the second block the first two rows, in the $l$-th block the first $l$
rows and so on, until the $m-1$-th block, were we only take the last row while
the $m$-th block is deleted completely. Finally, we have to multiply 
$H_m (X_k \otimes I_m - I_m \otimes X_k)$ with itself transposed which results
in the symmetric $M \times M$ block matrix 
\begin{align} \label{Matrix-Rep-HSXH}
	H_m \Sigma(X_k) H_m^{\Tt} &= \begin{pmatrix}
		B_{1,1}^k & B_{1,2}^k & \cdots & B_{1,m-1}^k \\
		B_{2,1}^k & B_{2,2}^k & \cdots & B_{2,m-1}^k \\
		\vdots & \vdots & \ddots & \vdots \\
		B_{m-1,1}^k & B_{m-1,2}^k & \cdots & B_{m-1,m-1}^k
	\end{pmatrix} \, .
\end{align}
For $l \in \{1, \ldots, m-1\}$ the $l$-th diagonal block can be calculated as the
symmetric $(m-l) \times (m-l)$ matrix 
\begin{align*}
	B_{l,l}^k = \begin{pmatrix}
			X_{l,k}^2 + X_{l+1,k}^2 & X_{l+1,k} X_{l+2,k} & X_{l+1,k} X_{l+3,k} & 
			\cdots \cdots & \cdots \cdots & X_{l+1,k} X_{m,k} \\
			X_{l+2,k} X_{l+1,k} & X_{l,k}^2 + X_{l+2,k}^2 & X_{l+2,k} X_{l+3,k} &
			\cdots \cdots & \cdots \cdots & X_{l+2,k} X_{m,k} \\
			X_{l+3,k} X_{l+1,k} & X_{l+3,k} X_{l+2,k} & X_{l,k}^2 + X_{l+3,k}^2 &
			\cdots \cdots & \cdots \cdots & X_{l+3,k} X_{m,k} \\
			\vdots & \vdots & & \ddots & & \vdots \\
			\vdots & \vdots & & & \ddots & \vdots \\
			X_{m,k} X_{l+1,k} & X_{m,k} X_{l+2,k} & X_{m,k} X_{l+3,k} & \cdots \cdots &
			\cdots \cdots & X_{l,k}^2 + X_{m,k}^2
		\end{pmatrix} \, .
\end{align*}
For $1 \leq r < s \leq m-1$ the non-diagonal block at position
$(r,s)$ is the $(m-r) \times (m-s)$ matrix that can be calculated as
\begin{align*}
	B_{r,s}^k = \begin{pmatrix}
		0_{(s-r-1) \times (m-s)} \\
		-b_{r,s}^k \\
		d_{r,s}^k
	\end{pmatrix}
\end{align*}
with the $1 \times (m-s)$ vector $b_{r,s}^k = (X_{r,k} X_{s+1,k}, X_{r,k} X_{s+2,k}, 
X_{r,k} X_{s+3,k}, \ldots, X_{r,k} X_{m,k} )$ and the $(m-s) \times (m-s)$ 
diagonal matrix $d_{r,s}^k = \diag ( X_{r,k} X_{s,k}, \ldots, X_{r,k} X_{s,k} )$. 
Further, it holds $B_{s,r}^k = (B_{r,s}^k)^{\Tt}$.
With \eqref{Matrix-Rep-HSXH} it follows that the matrix $H_m \Sigma(X_k)
H_m^{\Tt}$
is a symmetric $M \times M$ matrix such that $(H_m \Sigma(X_k) H_m^{\Tt})_{p,q}
\in \{ X_{i,k}^2 + X_{j,k}^2, \pm X_{i,k} X_{j,k}, 0 \}$ for some $i \neq j$. 
Moreover, each row (column) has exactly $2m-4$ matrix entries of type
$\pm X_{i,k} X_{j,k}$ and one diagonal entry of type $X_{i,k}^2 + X_{j,k}^2$
for some $i,j \in \{1, \ldots, m\}$ with $i \neq j$, respectively,
while the remaining $M-2m+3$ entries in each row (column) are equal to $0$.
\\ \\
%
%
\textbf{Acknowledgement}
Funding and support by
the Graduate School for Computing in Medicine and Life Sciences funded by
Germany's Excellence Initiative [DFG GSC 235/2] is gratefully acknowledged. 
%
%
\phantomsection
\addcontentsline{toc}{section}{References}
\bibliographystyle{plainurl}
\bibliography{BibPaper}
\end{document}